\crefname{assumption}{Assumption}{Assumptions}
\Crefname{assumption}{Assumption}{Assumptions}
\crefname{figure}{Figure}{Figures}
\theoremstyle{plain}
\newtheorem{theorem}{Theorem}[section]
\newtheorem{lemma}[theorem]{Lemma}
\newtheorem{corollary}[theorem]{Corollary}
\theoremstyle{definition}
\newtheorem{assumption}[theorem]{Assumption}
\theoremstyle{remark}
\newtheorem{remark}[theorem]{Remark}
\providecommand{\AMS}[1]{\textbf{AMS subject classifications.} #1}
\DeclareMathOperator{\bild}{ran}
\newcommand{\calA}{\mathcal{A}} 
\newcommand{\calW}{\mathcal{W}} 
\newcommand{\spann}[1]{\ensuremath{\langle #1\rangle}}
\newcommand{\tol}{\mathfrak{e}} 
\newcommand{\Ballop}{\mathbb{B}} 
\newcommand{\R}{\mathbb{R}} 
\newcommand{\N}{\mathbb{N}} 
\newcommand{\uopt}{\ensuremath{\bar u}}
\newcommand{\norm}[2]{\ensuremath{\lVert #1\rVert}}
\title{On the convergence of Broyden's method and some accelerated schemes for singular problems}
\author{Florian Mannel\thanks{University of Lübeck, Maria-Göppert-Straße 3, 23562 Lübeck, Germany (\href{mailto:mannel@mic.uni-luebeck.de}{mannel@mic.uni-luebeck.de}).}\, \href{https://orcid.org/0000-0001-9042-0428}{\includegraphics[height=.35cm]{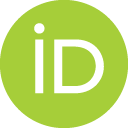}}}
\date{Preprint, \today}
\begin{document}

\maketitle

\begin{abstract}
We consider Broyden's method and some accelerated schemes for nonlinear equations having a strongly regular singularity of first order with a one-dimensional nullspace. Our two main results are as follows. First, we show that the use of a preceding Newton--like step ensures convergence for starting points in a starlike domain with density~1. This extends the domain of convergence of these methods significantly.
Second, we establish that the matrix updates of Broyden's method converge q-linearly with the same asymptotic factor as the iterates. This contributes to the long--standing question whether the Broyden matrices converge by showing that this is indeed the case for the setting at hand. 
Furthermore, we prove that the Broyden directions violate uniform linear independence, which implies that existing results for convergence of the Broyden matrices cannot be applied.
Numerical experiments of high precision confirm the enlarged domain of convergence, the q-linear convergence of the matrix updates, and the lack of uniform linear independence. In addition, they suggest that these results can be extended to singularities of higher order and that Broyden's method can converge r-linearly without converging q-linearly. The underlying code is freely available.
\end{abstract}

\begin{keywords}
	Broyden's method, accelerated schemes, domain of convergence, 
	singular problems, convergence of Broyden matrices, uniform linear independence
\end{keywords}

\AMS{49M15, 65H10, 65K05,90C30, 90C53}


\section{Introduction}

\subsection{Broyden's method for singular equations}

Broyden's method \cite{Broyden} is one of the most widely used quasi-Newton methods for solving systems of nonlinear equations $F(u)=0$ with $F:\R^n\rightarrow\R^n$, see \cite{Book_DennisSchnabel,Kelley,Martinez_00,NocWri_06,GriewankHistoricalRemarksOnQuasiNewtonMethods}. It reads as follows.

\pagebreak

\begin{algorithm2e}[h!]
	\SetAlgoRefName{BM}
	\DontPrintSemicolon
	\caption{Broyden's method}
	\label{alg_broy}
	\KwIn{ $u^0\in\R^n$, $B_0\in\R^{n\times n}$ \text{invertible}
	}
	\For{ $k=0,1,2,\ldots$ }
	{   
		\lIf{$F(u^k)=0$ }{let $u^\ast:=u^k$; STOP}
		Solve $B_k s^k = -F(u^k)$ for $s^k$
		\tcp*{Broyden step}
		Let $u^{k+1}:=u^k+s^k$\\
		Let $y^k:=F(u^{k+1})-F(u^k)$\\
		Let $B_{k+1}:=B_k+(y^k-B_k s^k)\frac{(s^k)^T}{\norm{s^k}{}^2}$		
		\tcp*{Broyden update}
	}
	\KwOut{ $u^\ast$ }
\end{algorithm2e}

This work concerns Broyden's method for \emph{singular equations}. 
Specifically, we consider smooth mappings $F$ that admit a root $\uopt$ with the following properties:
\begin{itemize}
	\item There is $\phi\in\R^n$ with $\norm{\phi}{2}=1$ such that 
	$N:=\ker(F'(\uopt))=\spann{\phi}$ and 
	such that $X:=\bild(F'(\uopt))$ satisfies 
	$X\oplus N=\R^n$.
	\item There holds $P_N (F''(\uopt)(\phi,\phi))\neq 0$, where $P_N\in\R^{n\times n}$ denotes the projection onto $N$ parallel to $X$. 
\end{itemize}
Using these assumptions Decker and Kelley established in \cite{DeckerKelley_BroydenSingularJacobian} that the iterates 
$(u^k)$ of Broyden's method converge to $\uopt$ if $\norm{u^0-\uopt}{}$, $\norm{B_0-F'(u^0)}{}$, and 
the angle between $u^0-\uopt$ and $N$ are sufficiently small. 
The main contributions of this article complement their result as follows: 
\begin{itemize}
	\item We show that the restriction on the angle can be removed almost completely if a Newton--like step is taken before starting Broyden's method. That is, the domain of convergence of Broyden's method can be extended significantly by a preceding Newton--like step; cf. Algorithm~\ref{alg_broypNl}. (In fact, we demonstrate this also for some \emph{accelerated Newton--type methods}.)
	
	\item We prove that the assumptions used by Decker and Kelley to show convergence of $(u^k)$ also imply convergence of the Broyden matrices $(B_k)$. (In fact, we establish this under weaker assumptions that do not require singularity of $F'(\uopt)$.)
\end{itemize}
Before we discuss these contributions further, let us stress that it is a long--standing open question whether the
Broyden matrices converge, cf. the survey articles  
\cite[Example~5.3]{QuasiNewtonMethodsReview_DennisMore},
\cite[p.~117]{Martinez_00}, \cite[p.~306]{GriewankHistoricalRemarksOnQuasiNewtonMethods} and \cite[p.~940]{AlBaaliSpedicatoMaggioni_BroyReview}. 
For the class of singular problems under consideration, this work provides an affirmative answer.

\begin{algorithm2e}
	\SetAlgoRefName{BMP}
	\DontPrintSemicolon
	\caption{Broyden's method with a preceding Newton--like step}
	\label{alg_broypNl}
	\KwIn{ $\hat u\in\R^n$, $\hat B\in\R^{n\times n}$ \text{invertible}
	}
	Let $u^0:=\hat u - \hat B^{-1} F(\hat u)$
	\tcp*{preceding Newton-like step}
	Select invertible $B_0\in\R^{n\times n}$
	\tcp*{initial guess for Broyden matrix}
	Call Algorithm~\ref{alg_broy} with $(u^0,B_0)$\\ 	
	\KwOut{ $u^\ast$ (generated by Algorithm~\ref{alg_broy})}
\end{algorithm2e}

\subsection{Enlarging the domain of convergence through a preceding Newton--like step}

\subsubsection{Broyden's method}\label{sec_introenlarging}

Decker and Kelley show in \cite{DeckerKelley_BroydenSingularJacobian} that for singularities satisfying the above two assumptions, the iterates of Broyden's method converge q-linearly to $\uopt$ for starting points $u^0$ in the intersection of a ball and a double cone with axis $\uopt+N$, i.e., for
\begin{equation}\label{def_theta}
	u^0 \in W_{\uopt}(\rho,\theta) := \Bigl\{
	u\in\R^n: \enspace \norm{P_X(u-\uopt)}{2}\leq \theta\norm{P_N(u-\uopt)}{2}\Bigr\}\;\cap\;\bar{\mathbb{B}}_{\rho}^\ast(\uopt)
\end{equation}
with sufficiently small $\rho,\theta>0$, provided $B_0$ is close to $F'(u^0)$. Here, 
$P_X\in\R^{n\times n}$ denotes the projection $P_X:=I-P_N$ onto $X$, and $\bar{\mathbb{B}}_{\rho}^\ast(\uopt)$ is the deleted closed ball of radius $\rho$ centered at $\uopt$. 
However, the set $W_{\uopt}(\rho,\theta)$
may cover only a small fraction of $\bar{\mathbb{B}}_{\rho}^\ast(\uopt)$ since, to ensure invertibility of $B_0\approx F'(u^0)$, it must be disjoint from the \emph{singular set} $\{u\in\R^n: \det(F'(u))=0\}$ that may form a small angle with $N$ at $\uopt$.
(The singular set is a manifold of dimension $n-1$ transversal to $N$, cf. e.g. the illustration in \cite{DeckerKelley_BroydenSingularJacobian}.) 
For Newton's method, in contrast, Griewank has shown  \cite{Griewank_Diss,Griewank_StarlikedomainsofconvNewtSing,Griewank_StarlikedomainsofconvNewtSingSiamRev} under weaker assumptions on the singularity that the set of starting points that induce q-linear convergence to $\uopt$ is considerably larger in that it includes a \emph{starlike domain with density~1 at $\uopt$}, say $\calW_{\uopt}$. 
We recall from \cite[(2)]{Griewank_StarlikedomainsofconvNewtSing} that $\calW_{\uopt}\subset\R^n$ is \emph{starlike with respect to $\uopt$} iff
\begin{equation*}
	u\in\calW_{\uopt} \qquad\Longrightarrow\qquad \lambda u + (1-\lambda)\uopt\in\calW_{\uopt} \quad\forall\lambda\in(0,1),
\end{equation*}
where $\uopt$ need not belong to $\calW_{\uopt}$. 
Moreover, $\calW_{\uopt}$ has \emph{density $1$ at $\uopt$} iff
\begin{equation*}
	\lim_{r\to 0^+} \frac{L(\calW_{\uopt}\cap\Ballop_r(\uopt))}{L(\Ballop_r(\uopt))} = 1,
\end{equation*}
where $L$ denotes the $n$-dimensional Lebesgue measure and $\Ballop_r(\uopt)$ is the open ball of radius $r$ centered at $\uopt$. 
In the remainder of this work it is understood that the point with respect to which $\calW_{\uopt}$ is starlike or has density~1 is $\uopt$. 
Griewank's result says that the starlike domain $\calW_{\uopt}$ occupies an arbitrarily large portion of $\Ballop_r(\uopt)$ as $r$ tends to zero. A graphical illustration of such a set $\calW_{\uopt}$ is provided in \cite[Section~2]{Griewank_StarlikedomainsofconvNewtSingSiamRev} and in the numerical section of this work.
To establish that result, a crucial part is to construct $\calW_{\uopt}$ with the property that the first Newton step leads into a domain that is shaped similarly to $W_{\uopt}(\rho,\theta)$. It is then shown that all subsequent iterates $u^k$, $k\geq 1$, stay inside this domain and converge q-linearly to $\uopt$. 
Similarly, we demonstrate that the set of points from which a Newton--like step leaps into $W_{\uopt}(\rho,\theta)$ contains a starlike domain with density~1, say again $\calW_{\uopt}$. It follows that for $\hat u\in \calW_{\uopt}$, Algorithm~\ref{alg_broypNl} generates a sequence $(u^k)$ that converges q-linearly to $\uopt$, provided $\hat B$ and $B_0$ are sufficiently close to $F'(\hat u)$, respectively, to $F'(u^0)$. 
This main result, which shows that the domain of convergence of Algorithm~\ref{alg_broypNl} is significantly larger than $W_{\uopt}(\rho,\theta)$,  
is provided in \Cref{thm_starlikedomainofconvforBroyden}. 

\subsubsection{Accelerated methods based on Newton--like steps}

It is possible to accelerate Newton--type methods for singular equations, for instance by use of extrapolation, cf. e.g.  \cite{Griewank_StarlikedomainsofconvNewtSingSiamRev,FischerIzmailovSolodov_NewtonSingularJacobianLineSearch2}. 
Whereas Newton--type methods and in particular Broyden's method are only linearly convergent, acceleration produces superlinearly convergent algorithms. Since some of these \emph{accelerated methods} employ $W_{\uopt}(\rho,\theta)$ as set of starting points, the use of a preceding Newton--like step yields, just as for Broyden's method, convergence for all starting points in a starlike domain with density~1. \Cref{thm_starlikedomainofconvforShamanskii} exemplifies this for an acceleration scheme that combines Newton steps and simplified Newton steps achieving a q-order of convergence of almost $\frac{1+\sqrt5}{2}$.

\subsection{Convergence of the Broyden matrices}\label{sec_introconvBrmatrices}

There are only two results available on the convergence of the Broyden matrices. The first one is a \emph{sufficient condition}: 
In \cite[Theorem~5.7]{MoreTrangenstein} and \cite{convBroydenmatrix} it is shown that if the sequence of normalized Broyden steps $\Bigl(\frac{s^k}{\norm{s^k}{2}}\Bigr)$ is \emph{uniformly linearly independent}, then $(B_k)$ converges and $\lim_{k\to\infty} B_k=F'(\uopt)$, where $\uopt=\lim_{k\to\infty} u^k$. This result holds for singular and for regular $F'(\uopt)$.
We recall from \cite[(AS.4)]{SROneConv_CGT} that a sequence of unit vectors $(v^k)\subset\R^n$ is called \emph{uniformly linearly independent} iff
there exist constants $m\in\N$ and $\rho>0$ such that for every sufficiently large $k\in\N$ the set
\begin{equation*}
	\bigl\{
	v^k, v^{k+1}, \ldots, v^{k+m}
	\bigr\}
\end{equation*}
contains $n$ vectors $v^{k_1}, \ldots, v^{k_n}$ such that all singular values of the $n\times n$ matrix 
\begin{equation*}
	\begin{pmatrix}
		v^{k_1} \, & \, v^{k_2} \, & \, \cdots \, & \, v^{k_n}
	\end{pmatrix}
\end{equation*}
are larger than $\rho$. However, conditions for the uniform linear independence (ULI) of the Broyden steps are unknown for $n\geq 2$ and we are not aware of finite-dimensional examples in which ULI is satisfied.
To the contrary, we show that in the setting under consideration the normalized Broyden steps violate ULI because they align increasingly better with the nullspace $N$. 
Since we also demonstrate that $(B_k)$ converges, this implies that ULI is not a necessary condition for convergence of $(B_k)$. 
In addition, we observe that $\lim_{k\to\infty} B_k=F'(\uopt)$ \emph{never} holds in the numerical experiments, which include an example with regular $F'(\uopt)$.
Moreover, the author of this paper has proven in \cite{Man_convBrmatressoneD} that 
ULI is necessarily violated if the system contains linear equations and the initial matrix $B_0$ is chosen such that its corresponding rows match the coefficients of the linear equations. 
For the special case of a system that contains at least $n-1$ linear equations and $B_0$ is chosen in the way just described, 
convergence of $(B_k)$ is shown in \cite{Man_convBrmatressoneD}, which constitutes the second available result on the convergence of the Broyden matrices. 
In summary, little is known about the convergence of the Broyden matrices and the uniform linear independence of the Broyden steps, and it is one of the main goals of this paper to contribute to this topic. In this respect, we establish the following results:
\begin{itemize}
	\item In \Cref{cor_rlinearrateforupdates} we provide a condition under which r-linear convergence of $(u^k)$ implies r-linear convergence of the Broyden updates $B_{k+1}-B_k$ to zero, and in 
	\Cref{thm_convsingularcase} we show that this condition is satisfied 
	for Algorithm~\ref{alg_broy} if $u^0\in W_{\uopt}(\rho,\theta)$, respectively, for Algorithm~\ref{alg_broypNl} if $\hat u\in\calW_{\uopt}$. 
	It follows that for these starting points the Broyden matrices converge, cf. \Cref{thm_rlinconvofBrupdates}. 
	
	\item In \Cref{lem_qlinearrateofupdates} we provide conditions under which the Broyden updates converge q-linearly to zero, and in \Cref{thm_qlinearrateforupdates} we show that these conditions are satisfied if $u^0\in W_{\uopt}(\rho,\theta)$, respectively, if $\hat u\in\calW_{\uopt}$. 
	
	\item In \Cref{lem_ULIviolated} we establish that if $u^0\in W_{\uopt}(\rho,\theta)$, respectively, if $\hat u\in\calW_{\uopt}$, 
	then ULI is violated.
	
	\item In \Cref{lem_singularityoflimitofBroydenmatrices} we prove that $B:=\lim_{k\to\infty} B_k$ satisfies $N\subset\ker(B)$. In the numerical experiments we will consistently observe $\ker(B-F'(\uopt))=N$, hence in particular $B\neq F'(\uopt)$.
		
\end{itemize} 

\subsection{Numerical experiments}

We conduct numerical experiments with high precision that confirm the theoretical results and suggest two generalizations of the theory. The \href{https://julialang.org/}{Julia} (\cite{JuliaLanguage}) code that underlies the experiments is available at  \href{https://arxiv.org/abs/2111.12393}{https://arxiv.org/abs/2111.12393}. 

\subsection{Related work}

For regular equations, there is an enormous amount of works on Broyden's method, so we can only provide a selection. Broyden, Dennis and Moré have shown the local q-superlinear convergence of several quasi-Newton methods including Broyden's method in \cite{BroydenDennisMore}. Subsequently, Broyden's method has been found to terminate finitely on linear systems \cite{ConvOfBroydensMethodOnLinearSystems_Gay,ConvOfBroydensMethodOnLinearSystemsGaysResultMadeMoreTransparent_OLeary}, 
it has been extended to infinite-dimensional settings \cite{Sachs_FirstSuperlinConvResultInInfiniteDimSpaces,Griewank_87}, and its global convergence based on line searches has been demonstrated \cite{Griewank_globalconvBroyden,LiFukushima2000}.
Recent studies include its extension to constrained nonlinear systems of 
equations \cite{MariniMoriniPorcelli}, to set-valued mappings \cite{BroydensMethodInHilbertSpacesSetValued_ArtachoBelyakovDontchevLopez}, and to single- and set-valued nonsmooth problems in infinite-dimensional spaces \cite{HybridapproachsimilartooursbutInASpecialCaseAndOnlyLinearConvergence_MuoiHaoMaassPidcock,Adly_Ngai_2017,ManRunTwo}.

For singular equations, there are few studies available concerning Broyden's method. 
Besides the aforementioned \cite{DeckerKelley_BroydenSingularJacobian} we are aware of \cite{BuhmilerEtAl}, 
where several quasi-Newton methods and accelerated schemes for regular first order singularities are discussed and studied numerically, 
the recent work \cite{Jarlebring}, where Broyden's method is applied to solve nonlinear eigenvalue problems,
and the brief description of Broyden's method in the context of \emph{bordering singularities} in \cite[end of section~5]{Griewank_StarlikedomainsofconvNewtSingSiamRev}.
In contrast, there are many contributions available concerning singular problems for other Newton--type methods and related accelerated schemes, e.g.
\cite{Gay_NewtonSingular,Gay_NewtonSingularCorrection,DeckerKelley_NewtonSingularJacobian,Griewank_StarlikedomainsofconvNewtSing} for Newton's method, \cite{DeckerKelley_ChordSingularJacobian} for the chord method, \cite{KelleyXue_InexactNewtonSingularJacobian} for inexact Newton methods, \cite{DeckerKelley_AccelerationNewtonSingularJacobian,KelleySuresh_AccelerationNewtonSingularJacobian,Kelley_ShamanskiiLikeAcceleration,ShenYpma_NewtonSingularJacobianAcceleration} for accelerated schemes, \cite{Rall_NewtonMultipleSolutions,Reddien_NewtonSingular,DeckerKellerKelley_NewtonsMethodSingularJacobianIncludingAccelerations} for Newton's method and accelerated schemes, and the survey article \cite{Griewank_StarlikedomainsofconvNewtSingSiamRev}.
More recent works are \cite{OberlinWright_NewtonSemismoothSingularJacobian}, where the smoothness assumptions on the Jacobian are weakened to strong semismoothness,
\cite{IzmailovKurennoySolodov_NewtonSingularJacobianStability,IzmailovKurennoySolodov_NewtonSingularJacobianLocalAttraction,FischerIzmailovSolodov_NewtonSingularJacobianConstrainedEquations}, where various Newton--type methods are considered, \cite{FischerIzmailovSolodov_NewtonSingularJacobianLineSearch1,FischerIzmailovSolodov_NewtonSingularJacobianLineSearch2}, where line-search strategies and acceleration are investigated, and the numerical study \cite{PollockSchwartz_BenchmarkingNewtonAnderson} of the Newton-Anderson method that includes results for singular problems. In \cite{GriewankOsborne_NewtonIrregularSingularities,IzmailovUskov_CriticalLagrangeMultipliers} Newton's method is analyzed for singular optimization problems.

We point out that many of the recent contributions to the theory focus on weakening the assumptions imposed on the singularities, often by using \emph{2-regularity}. Notably, 2-regularity does not require the singularities to be isolated, but still allows to prove that the domain of convergence of various Newton--type methods includes a starlike set that is 
``not too small'', see, e.g.,  \cite{OberlinWright_NewtonSemismoothSingularJacobian,IzmailovKurennoySolodov_NewtonSingularJacobianLocalAttraction,FischerIzmailovSolodov_NewtonSingularJacobianConstrainedEquations}. 
However, under 2-regularity the domain of convergence is generally not of density 1; cf., for instance, the discussion in \cite[p.~368]{IzmailovKurennoySolodov_NewtonSingularJacobianLocalAttraction}.
By following Decker and Kelley we place comparably strong assumptions on the singularity that imply, in particular, that $\bar u$ is isolated. On the plus side, we obtain a domain of convergence with density 1. The convergence of Broyden’s method under weaker assumptions like 2-regularity remains a topic for future research.

Let us also remark that \emph{local error bound conditions}, which are sometimes used to study Newton--type methods, 
cf. e.g. \cite{Griewank_87,BroydensMethodInHilbertSpacesSetValued_ArtachoBelyakovDontchevLopez,DontRock_InexactNewton}, 
do not hold in the setting of this work; see \Cref{rem_errorboundcond} for a rigorous argument. In fact, this is already true under 2-regularity, see the discussion of \emph{criticality} in \cite[Introduction]{IzmailovKurennoySolodov_NewtonSingularJacobianLocalAttraction}.

To the best of my knowledge there are no further results available on the convergence of the Broyden matrices except the two mentioned above. 
For other quasi-Newton methods the following results exist. 
Convergence of the symmetric rank-one (SR$1$) matrices to the true Hessian is shown in \cite{SROneConv_CGT} with ULI as the main assumption and in \cite{SROneConv_KBS} with positive definiteness as the main assumption.
For the self-scaling SR$1$ method convergence of the generated matrices is established in \cite{SROneSelfScalingConv_Sun} with ULI as the main assumption.
Powell shows in \cite{ConvOfPSBMatrices_Powell} that if ULI is satisfied, then the Powell--symmetric--Broyden (PSB) matrices converge to the true Hessian and he proposes algorithmic modifications to ensure that ULI holds.
In \cite{ConvBFGSMatrices_GePowell} it is shown that the Davidon--Fletcher--Powell (DFP) and Broyden--Fletcher--Goldfarb--Shanno (BFGS) matrices converge under very general assumptions and in \cite{ConvOfConvBrClMatrices_Stoer} these results are extended to the convex Broyden class excluding DFP. Notably, ULI is not used in \cite{ConvBFGSMatrices_GePowell,ConvOfConvBrClMatrices_Stoer} and 
convergence to limits other than the true Hessian is possible and actually observed. 

\subsection{Organization and notation}

This paper is organized as follows. In \Cref{sec_conviterates} we extend the domain of convergence of Broyden's method and accelerated schemes through a preceding Newton--like step.  \Cref{sec_main} is devoted to the convergence of the Broyden matrices, while \Cref{sec_furtherconvprop} establishes further convergence properties of Broyden's method like the violation of uniform linear independence. \Cref{sec_num} contains the numerical experiments and \Cref{sec_conclusion} provides a summary.

\emph{Notation} We use $\N_0:=\N\cup\{0\}$. By $\norm{\cdot}{}$ we denote the Euclidean norm and the spectral norm.


\section[Enlarging the domain of convergence]{Enlarging the domain of convergence for Broyden's method and some accelerated schemes}\label{sec_conviterates}

In this section we show that for the singular problems under consideration, the use of a preceding Newton--like step ensures convergence of Broyden's method and some accelerated schemes for all starting points in a starlike domain with density~1. 
This is a significant extension of the domain of convergence for these methods.
The following assumption describes the problem class under consideration. Here,  $\spann{\phi}$ denotes the linear hull of the vector $\phi$, $P_N\in\R^{n\times n}$ is the projection parallel to $X$ onto $N$, and $P_X=I-P_N$.

\begin{assumption}\label{A2}
	Let $F:\R^n\rightarrow\R^n$ be differentiable in a neighborhood of $\uopt$ and twice differentiable at $\uopt$, where $\uopt$ satisfies $F(\uopt)=0$. Moreover, suppose that the following conditions hold: 
	\begin{itemize}
		\item There is $\phi\in\R^n$ with $\norm{\phi}{2}=1$ such that 
		$N:=\ker(F'(\uopt))=\spann{\phi}$ and such that $X:=\bild(F'(\uopt))$ satisfies $X\oplus N=\R^n$. 
		\item There holds $P_N (F''(\uopt)(\phi,\phi))\neq 0$.
	\end{itemize}	
\end{assumption}

\begin{remark}	
	It follows from \cite[Theorem~2.1]{DeckerKelley_NewtonSingularJacobian} that under \Cref{A2} there is $\rho>0$ such that $\uopt$ is the only root of $F$ in $\Ballop_{\rho}(\uopt)$. That is, $\uopt$ is \emph{isolated}.
	Moreover, \Cref{A2} implies that $\uopt$ is a \emph{regular singularity of first order with a one-dimensional nullspace $N$}, cf. \cite[Section~2]{Griewank_StarlikedomainsofconvNewtSingSiamRev}---also called \emph{simple singularity} \cite[(2.7)]{Griewank_StarlikedomainsofconvNewtSingSiamRev}---as well as a
	\emph{strongly regular singularity} \cite[section~2, (45)]{Griewank_Diss}. Furthermore, if \Cref{A2} holds, then $F$ is \emph{2-regular at $\uopt$ in the direction $\phi$} \cite[Definition~2]{IzmailovKurennoySolodov_NewtonSingularJacobianLocalAttraction}.
	Finally, let us also mention that within the class of singular problems, first order singularities with 
	a one-dimensional nullspace are the ones occurring most frequently according to Griewank, cf. \cite[bottom of p.~540]{Griewank_StarlikedomainsofconvNewtSingSiamRev}. 
\end{remark}

\subsection{The result of Decker and Kelley for Broyden's method}

In \cite{DeckerKelley_BroydenSingularJacobian} it is shown that under \Cref{A2} there are constants $\rho>0$ and $\theta>0$ such that for every $u^0\in W_{\uopt}(\rho,\theta)$ and all $B_0$ close to $F'(u^0)$, Algorithm~\ref{alg_broy} is well-defined and generates a sequence $(u^k)$ that converges q-linearly to $\uopt$; for the definition of $W_{\uopt}(\rho,\theta)$, see \eqref{def_theta}. More precisely, the following result is available.

\begin{theorem}[See {\cite[Theorem~1.11]{DeckerKelley_BroydenSingularJacobian}}]\label{thm_fundlemsingularJacobian}
	Let \Cref{A2} hold, let $F:\R^n\rightarrow\R^n$ be twice differentiable in a neighborhood of $\uopt$ and three times differentiable at $\uopt$,
	and let $\mu_X,\mu_N>0$. Then there exist $\rho,\theta>0$ such that for all $(u^0,B_0)\in W_{\uopt}(\rho,\theta)\times\R^{n\times n}$ satisfying
	\begin{equation}\label{eq_errsB0}
		\left\{ \enspace 
		\begin{aligned}
			\norm{(B_0-F'(u^0)) P_X}{2} &\leq \mu_X\norm{u^0-\uopt}{2} \quad{and}\\
			\norm{(B_0-F'(u^0)) P_N}{2} &\leq \mu_N\norm{u^0-\uopt}{2}^2,
		\end{aligned}\right.
	\end{equation}
	Algorithm~\ref{alg_broy} is well-defined and generates a sequence $(u^k)\subset W_{\uopt}(\rho,\theta)$ such that 
	\begin{equation}\label{eq_conclestforsingJac2}
		\lim_{k\to\infty}\frac{\norm{u^{k+1}-\uopt}{2}}{\norm{u^k-\uopt}{2}} = \frac{\sqrt5 - 1}{2}
		\qquad\text{and}\qquad
		\lim_{k\to\infty}\frac{\norm{P_X(u^k-\uopt)}{2}}{\norm{P_N(u^k-\uopt)}{2}^2} = 0.
	\end{equation}
	Moreover, there holds for all $k\geq 1$ 
	\begin{equation}\label{eq_regbehavioronN}
		P_N(u^{k+1}-\uopt) = \lambda_k P_N(u^k-\uopt),
	\end{equation}	
	where $(\lambda_k)_{k\geq 1}$ satisfies $(\lambda_k)_{k\geq 1}\subset\left(\frac38,\frac45\right)$ and $\lim_{k\to\infty}\lambda_k = \frac{\sqrt5 - 1}{2}$.
\end{theorem}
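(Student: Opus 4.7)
\emph{Proof plan.} The first step is to work in coordinates adapted to the splitting $\R^n = X\oplus N$. Writing each iterate as $u^k-\uopt = x^k + \xi_k\phi$ with $x^k\in X$ and $\xi_k\in\R$, membership $u^k\in W_{\uopt}(\rho,\theta)$ becomes $\norm{x^k}{}\le\theta|\xi_k|$ together with $\norm{u^k-\uopt}{}\le\rho$. Decomposing $B_k$ into blocks along $X\oplus N$, the scalar that will control the analysis is $b_k\in\R$ defined by $P_N B_k\phi = b_k\phi$. A Taylor expansion at $\uopt$, using $F'(\uopt)|_N = 0$ and $P_N F''(\uopt)(\phi,\phi) = a\phi$ with $a\neq 0$, yields
\begin{equation*}
P_X F(u) = F'(\uopt)|_X\, x + O\bigl(\norm{u-\uopt}{}^2\bigr), \quad P_N F(u) = \tfrac{a}{2}\xi^2\phi + O\bigl(\norm{u-\uopt}{}\norm{x}{}\bigr) + O\bigl(\norm{u-\uopt}{}^3\bigr).
\end{equation*}
This captures the quadratic behavior of $F$ along the nullspace that drives the whole analysis.

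Next I would carry an inductive invariant along the iteration consisting of: (i) $u^k\in W_{\uopt}(\rho,\theta)$ together with the sharpened estimate $\norm{x^k}{} = O(\xi_k^2)$; (ii) the $X$-block of $B_k$ is a small perturbation of $F'(\uopt)|_X$, and in particular uniformly invertible; (iii) the dimensionless ratio $c_k := 2b_k/(a\xi_k)$ stays in a bounded interval separated from $0$ and from $1$. The base step uses precisely the bounds \eqref{eq_errsB0}, which are calibrated so that the $X$-error of $B_0-F'(u^0)$ has the natural order $O(\norm{u^0-\uopt}{})$ while the $N$-error has the finer order $O(\norm{u^0-\uopt}{}^2)$, matching the respective sizes of $P_X B_k|_X - F'(\uopt)|_X$ and of $b_k$.

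The core of the proof is the scalar dynamics of $(\xi_k, b_k)$. Using the identity $y^k - B_k s^k = F(u^{k+1})$, the Broyden update becomes $B_{k+1}-B_k = F(u^{k+1})(s^k)^T/\norm{s^k}{}^2$, and the invariant forces $s^k$ to be nearly colinear with $\phi$ with $(s^k)^T\phi\approx(\lambda_k-1)\xi_k$, where $\lambda_k := \xi_{k+1}/\xi_k$. Combining these with the Taylor expansion above reduces the iteration, at leading order, to the planar map
\begin{equation*}
\lambda_k = 1 - \frac{1}{c_k}, \qquad c_{k+1} = \frac{c_k}{\lambda_k} + \frac{\lambda_k}{\lambda_k-1}.
\end{equation*}
Its fixed points solve $\lambda^3 - 2\lambda + 1 = 0$; the only one compatible with the invariant is $\lambda = (\sqrt 5 - 1)/2$, with $c = \lambda^2/(\lambda-1)^2 = (3+\sqrt 5)/2$, and linearization shows it is attracting. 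This yields $\lambda_k\to(\sqrt 5 - 1)/2$, eventual inclusion $\lambda_k\in(3/8,4/5)$, and hence \eqref{eq_regbehavioronN} after reading off the definition of $\lambda_k$.

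The main obstacle, in my view, is closing the induction, specifically propagating the refined bound $\norm{x^k}{} = O(\xi_k^2)$—in fact $o(\xi_k^2)$ is needed to obtain the second limit in \eqref{eq_conclestforsingJac2}. The rank-one Broyden update can a priori pollute the $X$-block, so one must show that each update contributes at most $O(\xi_k)$ to $P_X B_k|_X$; equivalently, that the $X$-component of $F(u^{k+1})$ is $O(\xi_k^3)$, since the denominator $\norm{s^k}{}^2\asymp\xi_k^2$. Establishing this requires both pieces of the Taylor expansion together with the current induction hypothesis on $\norm{x^k}{}/\xi_k^2$, and this cross-coupling between the $X$- and $N$-blocks is the delicate part of the argument. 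Once it is in place, the first limit in \eqref{eq_conclestforsingJac2} follows from $\norm{u^k-\uopt}{}\sim|\xi_k|$ combined with $\lambda_k\to(\sqrt 5 - 1)/2$, while the second is exactly the refined bound $\norm{x^k}{}/\xi_k^2\to 0$.
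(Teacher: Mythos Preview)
The paper does not actually prove this theorem; its ``proof'' is a list of pointers to equations in Decker and Kelley's original article. Your sketch \emph{is} essentially the Decker--Kelley argument: the $X\oplus N$ block decomposition, the reduction to the scalar recursion for $(\xi_k,c_k)$ and the planar map you display, the attracting fixed point at $\lambda=(\sqrt5-1)/2$, and the inductive propagation of the refined $X$-estimate are precisely their ingredients.

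Two small corrections. First, the theorem asserts $\lambda_k\in(\tfrac38,\tfrac45)$ for \emph{all} $k\ge1$, not merely eventually; this is arranged by choosing $\rho,\theta$ small enough that the invariant interval for $c_k$ is trapped from $k=1$ onward, a point you allude to in the base step but leave implicit. Second, your ``equivalently, the $X$-component of $F(u^{k+1})$ is $O(\xi_k^3)$'' miscounts by one power of $\xi_k$: the rank-one update has norm $\lVert P_X F(u^{k+1})\rVert\cdot\lVert s^k\rVert/\lVert s^k\rVert^2=\lVert P_X F(u^{k+1})\rVert/\lVert s^k\rVert$, so an $O(\xi_k^2)$ bound on $P_X F(u^{k+1})$---which is what the Taylor expansion together with the inductive hypothesis $\lVert x^{k+1}\rVert=O(\xi_k^2)$ actually delivers---already yields an $O(\xi_k)$ contribution, and this is summable. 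Neither point undermines your plan.
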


\begin{proof}
	All claims are established in \cite{DeckerKelley_BroydenSingularJacobian}: 
	(1.13) in \cite{DeckerKelley_BroydenSingularJacobian} provides the left limit and from (2.13) together with $\theta_n\to 0$ we deduce the right limit.
	The claim \eqref{eq_regbehavioronN} is contained in \cite[(2.14)]{DeckerKelley_BroydenSingularJacobian}.
	(We note that the index ``$n+1$'' in \cite[(1.14)]{DeckerKelley_BroydenSingularJacobian} is a misprint that should read ``$n$''.)
\end{proof}

\begin{remark}
	\phantom{to enforce linebreak}
	\begin{enumerate}
		\item[1)] The conditions \eqref{eq_errsB0} correct an error in \cite[Theorem~1.11]{DeckerKelley_BroydenSingularJacobian}, where the right-hand sides contain $\rho$, respectively, $\rho^2$, instead of 
		$\norm{u^0-\uopt}{2}$ and $\norm{u^0-\uopt}{2}^2$, see \cite[(1.12)]{DeckerKelley_BroydenSingularJacobian}.
		The fact that the original $\rho$ and $\rho^2$ have to be replaced by 
		$\norm{u^0-\uopt}{2}$ and $\norm{u^0-\uopt}{2}^2$, respectively,
		is mandated by \cite[(2.4)]{DeckerKelley_BroydenSingularJacobian}, equation for $D_0$, where it is needed that, in their notation, $P_N E_0 P_N = P_N F'(u^0) P_N + O(\norm{u^0-\uopt}{2}^2)$ (our notation uses $B_0$ for $E_0$),
		and similarly for $A_0$ in \cite[(2.4)]{DeckerKelley_BroydenSingularJacobian}. 
		\item[2)] As pointed out by a referee, it would be interesting to see if a result similar to \Cref{thm_fundlemsingularJacobian} can be proven if \Cref{A2} is replaced by 2-regularity of $F$ at $\uopt$ in a direction of $N$. 
		This would, in particular, allow for non-isolated singularities, a setting that is excluded by \Cref{A2}. For Newton--type methods such a result is available in \cite[Theorem~1]{IzmailovKurennoySolodov_NewtonSingularJacobianLocalAttraction}. 
	\end{enumerate}
\end{remark}

\subsection{Enlarging the domain of convergence of Broyden's method by a preceding Newton--like step}

Using results of Griewank \cite{Griewank_StarlikedomainsofconvNewtSing,Griewank_StarlikedomainsofconvNewtSingSiamRev} and Decker and Kelley \cite{DeckerKelley_NewtonSingularJacobian} the domain of convergence $W_{\uopt}(\rho,\theta)$ in \Cref{thm_fundlemsingularJacobian} can be extended significantly by use of a preceding Newton--like step as described in Algorithm~\ref{alg_broypNl}. 
Specifically, we establish that it includes a starlike domain with density~1, see \Cref{sec_introenlarging} for definitions, while the convergence behavior of $(u^k)$ described by \Cref{thm_fundlemsingularJacobian} is maintained. 
This is one of the main results of this work. 

\begin{theorem}\label{thm_starlikedomainofconvforBroyden}
	Let \Cref{A2} hold and let $F$ be twice differentiable in a neighborhood of $\uopt$ and three times differentiable at $\uopt$.
	Then there exist constants $\rho,\theta>0$ and a starlike domain $\calW_{\uopt}$ with density~1 such that the following properties are satisfied:
	\begin{enumerate}
		\item[1)] For each $\hat u\in \calW_{\uopt}$ there is a number $\sigma=\sigma(\hat u)>0$ with the following properties: For every initial guess $(\hat u,\hat B)\in \calW_{\uopt}\times\Ballop_{\sigma}(F'(\hat u))$ in Algorithm~\ref{alg_broypNl}, $u^0$ is well-defined and satisfies $u^0\in W_{\uopt}\bigl(\frac34\rho,\frac34\theta\bigr)$. If, in addition, $B_0$ is selected such that $B_0\in\Ballop_{\sigma}(F'(u^0))$,
		then Algorithm~\ref{alg_broypNl} generates a sequence $(u^k)$ that satisfies \eqref{eq_conclestforsingJac2} and \eqref{eq_regbehavioronN}.
		\item[2)] There holds $W_{\uopt}(\rho,\theta)\subset\calW_{\uopt}$ and 
		every $(u^k)$ generated according to 1) satisfies $(u^k)\subset W_{\uopt}\bigl(\frac34\rho,\frac34\theta\bigr)$.
		\item[3)] For any compact set $W\subset\calW_{\uopt}$, the constant $\sigma$ in 1) can be chosen independently of $\hat u$ for all $\hat u \in W$.
	\end{enumerate}
\end{theorem}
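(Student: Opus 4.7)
The plan is to chain together Griewank's starlike-density-one result for the exact Newton iteration on singular problems~\cite{Griewank_StarlikedomainsofconvNewtSing,Griewank_StarlikedomainsofconvNewtSingSiamRev} with the Decker--Kelley convergence result \Cref{thm_fundlemsingularJacobian}, bridging the two by a perturbation argument that exchanges $F'(\hat u)^{-1}$ for $\hat B^{-1}$. I would first fix $\rho,\theta>0$ small enough that \Cref{thm_fundlemsingularJacobian} applies on $W_{\uopt}\bigl(\tfrac34\rho,\tfrac34\theta\bigr)$ with constants $\mu_X,\mu_N>0$ to be chosen; after shrinking, $W_{\uopt}(\rho,\theta)$ still lies inside the singular-set-free region where $F'$ is invertible.

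\textbf{Construction of $\calW_{\uopt}$.} From Griewank's analysis (built on \cite{DeckerKelley_NewtonSingularJacobian}) there exists a starlike domain $\mathcal{V}\subset\R^n$ of density~$1$ at $\uopt$ such that the exact Newton step $\hat u\mapsto\hat u-F'(\hat u)^{-1}F(\hat u)$ sends $\mathcal{V}$ into the smaller cone $W_{\uopt}\bigl(\tfrac12\rho,\tfrac12\theta\bigr)$. I would then set
\begin{equation*}
\calW_{\uopt} \,:=\, \mathcal{V} \,\cup\, W_{\uopt}(\rho,\theta).
\end{equation*}
A union of two sets that are starlike with respect to $\uopt$ is itself starlike with respect to $\uopt$, and density~$1$ is inherited from $\mathcal{V}$; this also proves the inclusion $W_{\uopt}(\rho,\theta)\subset\calW_{\uopt}$ in part~2).

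\textbf{Perturbation and application of \Cref{thm_fundlemsingularJacobian}.} For $\hat u\in\calW_{\uopt}$ the matrix $F'(\hat u)$ is invertible, so I would decompose
\begin{equation*}
u^0 \,=\, \bigl[\hat u - F'(\hat u)^{-1}F(\hat u)\bigr] \,+\, \bigl[F'(\hat u)^{-1}-\hat B^{-1}\bigr]F(\hat u),
\end{equation*}
and bound the second summand by $C(\hat u)\sigma$ whenever $\hat B\in\Ballop_{\sigma}(F'(\hat u))$ with $\sigma$ small. Choosing $\sigma=\sigma(\hat u)>0$ sufficiently small places $u^0$ in $W_{\uopt}\bigl(\tfrac34\rho,\tfrac34\theta\bigr)$: if $\hat u\in\mathcal{V}$ the exact Newton iterate already lies in $W_{\uopt}\bigl(\tfrac12\rho,\tfrac12\theta\bigr)$ and the buffer absorbs the perturbation in both the radial and angular conditions; if $\hat u\in W_{\uopt}(\rho,\theta)\setminus\mathcal{V}$ a direct continuity argument yields the same conclusion. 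Shrinking $\sigma(\hat u)$ further guarantees that $B_0\in\Ballop_\sigma(F'(u^0))$ entails \eqref{eq_errsB0}, which is possible because $\|u^0-\uopt\|$ is strictly positive once $\hat u$ is fixed. \Cref{thm_fundlemsingularJacobian} then produces the sequence $(u^k)\subset W_{\uopt}\bigl(\tfrac34\rho,\tfrac34\theta\bigr)$ satisfying \eqref{eq_conclestforsingJac2} and \eqref{eq_regbehavioronN}. For part~3), I would arrange $\sigma(\hat u)$ to be a continuous, strictly positive function on $\calW_{\uopt}$, so that its infimum on any compact $W\subset\calW_{\uopt}$ is positive.

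\textbf{Main obstacle.} The central difficulty is the construction of $\mathcal{V}$: one must track how the exact Newton step deflects a generic direction toward the nullspace $N$, exploiting the second-order structure from \Cref{A2}, and establish density~$1$ quantitatively. This is carried out by Griewank, but here it must be reused with explicit control placing the Newton image inside $W_{\uopt}\bigl(\tfrac12\rho,\tfrac12\theta\bigr)$, so that the perturbation buffer in Step~3 suffices simultaneously for the ball constraint and for the angular constraint $\|P_X(\cdot)\|\le \theta \|P_N(\cdot)\|$. A secondary issue is that $\sigma(\hat u)$ is forced to shrink to zero as $\hat u$ approaches the singular manifold of $F'$ because $\|F'(\hat u)^{-1}\|$ blows up there, which is why only compact-set uniformity is asserted in part~3).
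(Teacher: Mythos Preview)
Your overall strategy matches the paper's: invoke Griewank's starlike density-one domain for the exact Newton step, perturb to accommodate $\hat B\approx F'(\hat u)$, and then apply \Cref{thm_fundlemsingularJacobian}. The union construction $\calW_{\uopt}=\mathcal{V}\cup W_{\uopt}(\rho,\theta)$ and the compactness argument for part~3) are likewise the same in spirit (the paper uses a finite-subcover argument rather than a continuous $\sigma(\hat u)$, but either route works).

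One step is underspecified. For $\hat u\in W_{\uopt}(\rho,\theta)\setminus\mathcal{V}$ your ``direct continuity argument'' does not explain why the exact Newton iterate lands in a \emph{strictly smaller} cone, which is what you need before the perturbation buffer can be invoked; continuity of the Newton map alone says nothing about contraction into $W_{\uopt}(\tfrac12\rho,\tfrac12\theta)$. The paper treats this case by explicitly citing \cite[Theorem~1.2 and (2.7)--(2.8)]{DeckerKelley_NewtonSingularJacobian}, which shows that Newton's method maps $W_{\uopt}(\rho_2,\theta_2)$ into $W_{\uopt}\bigl(\tfrac{11}{20}\rho_2,\tfrac{11}{20}\theta_2\bigr)$, and then layers the constants ($\rho_1,\theta_1$ from \Cref{thm_fundlemsingularJacobian}, then $\rho_2<\rho_1$, $\theta_2<\theta_1$ from Decker--Kelley, finally $\rho:=\tfrac45\rho_2$, $\theta:=\tfrac45\theta_2$) so that the inclusion $W_{\uopt}(\rho,\theta)\subset\calW_{\uopt}$ and the target $W_{\uopt}(\tfrac34\rho,\tfrac34\theta)$ line up. Once you plug in this Decker--Kelley contraction result for the cone case, your argument is complete and essentially identical to the paper's.
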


\begin{proof}
	\textbf{Proof of 1) and 2):} To prove 1) we mainly have to show that for appropriately chosen $\sigma$, $\rho$ and $\theta$ the pair $(u^0,B_0)$ satisfies the assumptions of \Cref{thm_fundlemsingularJacobian}. In doing so we will also establish 2).
	We recall that the invertible matrices form an open set on which $B\mapsto B^{-1}$ is continuous. 
	
	Let $\mu_X:=\mu_N:=2$ and let $\rho_1,\theta_1>0$ be such that the assertions of \Cref{thm_fundlemsingularJacobian} are satisfied for every $(u^0,B_0)\in W_{\uopt}(\rho_1,\theta_1)$ such that $B_0$ satisfies \eqref{eq_errsB0}. Without loss of generality (wlog.) we can assume that $\rho_1\leq 1$, so $\min\{\rho_1,\rho_1^2\}=\rho_1^2$.
	From \cite[Theorem~1.2 and (2.7)--(2.8) in its proof]{DeckerKelley_NewtonSingularJacobian}
	we obtain $\rho_2\in(0,\rho_1)$ and $\theta_2\in(0,\theta_1)$ such that 
	for every $\hat u\in W_{\uopt}(\rho_2,\theta_2)$ the matrix $F'(\hat u)$ is invertible and the Newton iterate
	\begin{equation*}
		u^+:=\hat u - F'(\hat u)^{-1}F(\hat u)
	\end{equation*} 
	satisfies $u^+\in W_{\uopt}(\frac{11}{20}\rho_2,\frac{11}{20}\theta_2)$.  
	Thus, for every $\hat u\in W_{\uopt}(\rho_2,\theta_2)$ there is $\sigma_1\in(0,\rho_2)$ such that all $\hat B\in\Ballop_{\sigma_1}(F'(\hat u))$ are invertible, 
	\begin{equation*}
		u^0=\hat u-\hat B^{-1}F(\hat u)
	\end{equation*}
	satisfies $u^0\in W_{\uopt}(\frac{3}{5}\rho_2,\frac{3}{5}\theta_2)\subset W_{\uopt}(\rho_1,\theta_1)$, and all $B_0\in\mathbb{B}_{\sigma_1}(F'(u^0))$ are invertible and satisfy \eqref{eq_errsB0} for $\rho=\rho_1$ and $\theta=\theta_1$. 
	That is, $(u^0,B_0)$ satisfies the requirements of \Cref{thm_fundlemsingularJacobian} whenever $(\hat  u,\hat B)\in W_{\uopt}(\rho_2,\theta_2)\times\Ballop_{\sigma_1}(F'(\hat u))$. 	
	We next discuss the case $\hat u\notin W_{\uopt}(\rho_2,\theta_2)$.
	Let $\hat\calW_{\uopt}$ be the starlike domain with density $1$ from \cite[Theorem~6.1~(i)]{Griewank_StarlikedomainsofconvNewtSing} (denoted there by $\cal R$).
	It is shown in \cite[Theorem~6.1~(i)]{Griewank_StarlikedomainsofconvNewtSing} that Newton's method converges to $\uopt$ for all starting points in $\hat\calW_{\uopt}$. In particular, for any $\hat u\in\hat\calW_{\uopt}$ the matrix $F'(\hat u)$ is invertible, which implies that there is $\sigma_2\in(0,\sigma_1)$ such that 
	all $\hat B\in\Ballop_{\sigma_2}(F'(\hat u))$ are invertible. 
	The proof of \cite[Theorem~6.1~(i)]{Griewank_StarlikedomainsofconvNewtSing} shows that we can assume wlog. that for any $\hat u\in\hat\calW_{\uopt}$ the Newton iterate
	\begin{equation*}
		u^+:= \hat u - F'(\hat u)^{-1}F(\hat u)
	\end{equation*}
	satisfies $u^+\in W_{\uopt}(\frac{11}{20}\rho_2,\frac{11}{20}\theta_2)$
	(more precisely, this can be achieved by multiplying with sufficiently small constants the second and third term in the minimum that constitutes the definition of $r(t)$ in said proof).
	This implies that for each $\hat u\in\hat\calW_{\uopt}$ there is a constant $\sigma\in(0,\sigma_2)$ such that any $\hat B\in\Ballop_\sigma(F'(\hat u))$ is invertible, 
	\begin{equation*}
		u^0=\hat u - \hat B^{-1}F(\hat u)
	\end{equation*}
	satisfies $u^0\in W_{\uopt}(\frac35\rho_2,\frac35\theta_2)$,
	and all $B_0\in\mathbb{B}_\sigma(F'(u^0))$ are invertible and satisfy \eqref{eq_errsB0} for $\rho=\rho_1$ and $\theta=\theta_1$. 
	That is, $(u^0,B_0)$ satisfies the requirements of \Cref{thm_fundlemsingularJacobian} whenever $(\hat u,\hat B)\in \hat\calW_{\uopt}\times\Ballop_\sigma(F'(\hat u))$. 	
	
	With $\text{int}$ denoting the topological interior we find that $\calW_{\uopt}:=\hat\calW_{\uopt}\cup \text{int}(W_{\uopt}(\rho_2,\theta_2))$ is a starlike domain with density~$1$ that contains $W_{\uopt}(\frac45\rho_2,\frac45\theta_2)$ and satisfies all claims if we set $\rho:=\frac45\rho_2$ and $\theta:=\frac45\theta_2$.\\
	\textbf{Proof of 3):} A slight generalization of the arguments in the proof of 1) and 2) yields that for any
	$u\in\calW_{\uopt}$ there are $\sigma=\sigma(u)>0$ and $\hat\sigma=\hat\sigma(u)>0$ such that for every $\hat u\in\Ballop_{\hat\sigma}(u)$ all $\hat B\in\Ballop_{\sigma}(F'(\hat u))$ are invertible,
	\begin{equation*}
		u^0=\hat u - \hat B^{-1}F(\hat u)
	\end{equation*}
	satisfies $u^0\in W_{\uopt}(\frac35\rho_2,\frac35\theta_2)$,
	and all $B_0\in\mathbb{B}_{\sigma}(F'(u^0))$ are invertible and satisfy \eqref{eq_errsB0} for $\rho=\rho_1$ and $\theta=\theta_1$. 
	The open cover 
	\begin{equation*}
		W\subset\bigcup_{u\in W} \Ballop_{\hat\sigma(u)}(u)
	\end{equation*}
	of $W$ has a finite subcover, say $\cup_{i=1}^M \Ballop_{\hat\sigma(u_i)}(u_i)$; 
	the claim holds for $\sigma:=\min\{\sigma(u_i): i=1,\ldots,M\}$.
\end{proof}

\subsection{Enlarging the domain of convergence of accelerated schemes by a preceding Newton--like step}

The proof of \Cref{thm_starlikedomainofconvforBroyden} shows in particular that 
\begin{itemize}
	\item for all $\rho,\theta>0$ there is a starlike domain $\calW_{\uopt}$ with density~1 such that applying a Newton--like step to any $\hat u\in\calW_{\uopt}$ yields a point $u^0\in W_{\uopt}(\frac34\rho,\frac34\theta)$;
	\item if $W\subset\calW_{\uopt}$ is compact, the size of the perturbation $\hat B-F'(\hat u)$ can be taken independently of $\hat u\in W$ without violating $u^0=\hat u-\hat B^{-1}F(\hat u)\in W_{\uopt}(\frac34\rho,\frac34\theta)$.	
\end{itemize}
These results concern only the preceding Newton--like step in Algorithm~\ref{alg_broypNl}; they are independent of Broyden's method. 
Therefore, they still hold if \Cref{A2} is replaced by the weaker assumptions used in \cite{Griewank_StarlikedomainsofconvNewtSing}; 
\Cref{A2} is required in \Cref{thm_starlikedomainofconvforBroyden} only to ensure that Broyden's method in Algorithm~\ref{alg_broypNl} is well-behaved.
Since the results on the preceding Newton--like step are not related to Broyden's method specifically, they can be applied to other methods as well, producing statements similar to \Cref{thm_starlikedomainofconvforBroyden}. For instance, we can apply them to finite-dimensional versions of the acceleration schemes presented in \cite[Theorem~1.3]{KelleySuresh_AccelerationNewtonSingularJacobian},  
\cite[Theorem~1.4]{KelleySuresh_AccelerationNewtonSingularJacobian}, and \cite[Theorem~1.4]{Kelley_ShamanskiiLikeAcceleration}.
We spell this out for the Shamanskii--like method of \cite[Theorem~1.4]{Kelley_ShamanskiiLikeAcceleration}, which is a three-point scheme combined with an initial Newton step. Kelley's algorithm together with the preceding Newton--like step that we propose is provided in Algorithm~\ref{alg_Shamlike}. 
To comply with the notation of \cite[Theorem~1.4]{Kelley_ShamanskiiLikeAcceleration}, the iterate resulting from the preceding Newton--like step is denoted $u^{-1}$.

\begin{algorithm2e}
	\SetAlgoRefName{SMP}
	\DontPrintSemicolon
	\caption{Kelley's Shamanskii--like method with a preceding Newton--like step}
	\label{alg_Shamlike}
	\KwIn{ $\hat u\in\R^n$, \, $\hat B\in\R^{n\times n}$ \text{invertible}, \, 
		$C\in\R\setminus\{0\}$, \, $\alpha\in(0,\frac{\sqrt5 - 1}{2})$
	}
	Let $u^{-1}:=\hat u - \hat B^{-1} F(\hat u)$
	\tcp*{preceding Newton-like step}
	Let $u^0:=u^{-1} - F'(u^{-1})^{-1} F(u^{-1})$
	\tcp*{begin of Kelley's algorithm}
	\For{ $k=0,1,2,\ldots$ }
	{   
		\lIf{$F(u^k)=0$ }{let $u^\ast:=u^k$; STOP}
		Let $y^{k+1}:=u^k-F'(u^k)^{-1} F(u^k)$
		\tcp*{Newton iterate}
		Let $z^{k+1}:= F'(u^k)^{-1} F(y^{k+1})$
		\tcp*{simplified Newton step}
		Let $u^{k+1}:=y^{k+1}-\left( 4 - C\norm{z^{k+1}}{2}^\alpha\right) z^{k+1}$
		\tcp*{correction of Newton iterate}
	}
	\KwOut{ $u^\ast$ }
\end{algorithm2e}

We note that Broyden steps are not involved in Algorithm~\ref{alg_Shamlike}. In fact, there seem to be no accelerated methods available that involve Broyden steps (except for the remarks in \cite[Section~5]{Griewank_StarlikedomainsofconvNewtSingSiamRev}).
The advantage of accelerated methods such as Algorithm~\ref{alg_Shamlike} is that their convergence is of higher order in comparison to Newton--type methods and Broyden's method. This comes at the cost of stronger differentiability requirements, cf. Kelley's result \cite[Theorem~1.4]{Kelley_ShamanskiiLikeAcceleration}.
Because we want to invoke Kelley's result, we have to require the same amount of differentiability. 

\begin{theorem}\label{thm_starlikedomainofconvforShamanskii}
	Let \Cref{A2} hold and let $F$ be four times continuously differentiable in a neighborhood of $\uopt$. 
	Then there exist constants $\rho,\theta>0$ and a starlike domain $\calW_{\uopt}$ with density~1 such that the following properties are satisfied:
	\begin{enumerate}
		\item[1)] For every $\hat u\in \calW_{\uopt}$ there is a number $\sigma=\sigma(\hat u)>0$ such that for each $(\hat u,\hat B)\in \calW_{\uopt}\times\Ballop_{\sigma}(F'(\hat u))$ the iterates
		$(u^k)_{k\geq -1}$ of Algorithm~\ref{alg_Shamlike} are well-defined
		and satisfy $(u^k)_{k\geq 0}\subset W_{\uopt}(\frac34\rho,\frac34\theta)$. Moreover, $(u^k)$ converges to $\uopt$ with q-order $1+\alpha$, i.e.,
		\begin{equation*}
			\limsup_{k\to\infty} \, \frac{\norm{u^{k+1}-\uopt}{}}{\norm{u^k-\uopt}{2}^{1+\alpha}} < \infty.
		\end{equation*}
		\item[2)] For any compact set $W\subset\calW_{\uopt}$, the constant $\sigma$ in 1) can be chosen independently of $\hat u$ for all $\hat u \in W$.
	\end{enumerate}
\end{theorem}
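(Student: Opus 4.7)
The plan is to mirror the proof of \Cref{thm_starlikedomainofconvforBroyden} almost verbatim, replacing the application of \Cref{thm_fundlemsingularJacobian} (Decker--Kelley for Broyden's method) with Kelley's \cite[Theorem~1.4]{Kelley_ShamanskiiLikeAcceleration} for the Shamanskii--like iteration. The two bullet points stated immediately before the theorem already isolate what the preceding Newton--like step contributes: they are properties of one Newton--like step on a simple singularity (via Griewank's starlike domain and Decker--Kelley's shrinking estimate for Newton's method), and are entirely independent of what iterative scheme is executed afterwards. Consequently they can be reused here without change, and only the ``downstream'' part of the argument has to be swapped.

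Concretely, I would first invoke Kelley's \cite[Theorem~1.4]{Kelley_ShamanskiiLikeAcceleration} to obtain $\rho_0,\theta_0>0$ such that for every $u^{-1}\in W_{\uopt}(\rho_0,\theta_0)$ the subsequent iterates $u^0,u^1,\ldots$ produced by the body of \ref{alg_Shamlike} (i.e., when the preceding Newton--like step is skipped) are well-defined, remain in $W_{\uopt}(\rho_0,\theta_0)$, and converge to $\uopt$ with q-order $1+\alpha$. I would then set $\rho:=\tfrac{4}{3}\rho_0$ and $\theta:=\tfrac{4}{3}\theta_0$, so that $W_{\uopt}(\tfrac{3}{4}\rho,\tfrac{3}{4}\theta)=W_{\uopt}(\rho_0,\theta_0)$.

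Next, I would apply the first bullet point preceding the theorem to these $\rho,\theta$ to obtain a starlike domain $\calW_{\uopt}$ with density~$1$ and, for each $\hat u\in\calW_{\uopt}$, a number $\sigma(\hat u)>0$ such that every $\hat B\in\Ballop_{\sigma(\hat u)}(F'(\hat u))$ is invertible and the Newton--like iterate $u^{-1}=\hat u-\hat B^{-1}F(\hat u)$ lies in $W_{\uopt}(\tfrac{3}{4}\rho,\tfrac{3}{4}\theta)=W_{\uopt}(\rho_0,\theta_0)$. Combining this with the previous paragraph gives part~1). Part~2) then follows by the same open-cover argument as in the proof of part~3) of \Cref{thm_starlikedomainofconvforBroyden}, using the second bullet point to secure the local uniformity of $\sigma$ and passing to a finite subcover of the compact set $W\subset\calW_{\uopt}$.

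The main obstacle I anticipate is verifying that Kelley's theorem genuinely yields the cone invariance $(u^k)_{k\geq 0}\subset W_{\uopt}(\rho_0,\theta_0)$, not merely convergence together with residence in a Euclidean ball. Kelley's formulation takes a $W$--type cone as the set of admissible starting points and tracks the regular/nullspace decomposition via the same kind of ratio $\|P_X(u^k-\uopt)\|/\|P_N(u^k-\uopt)\|^2$ that appears in \eqref{eq_conclestforsingJac2}; hence a direct inspection of the proof, combined if necessary with a further shrinkage of $\rho_0$ and $\theta_0$, should yield the required invariance. Everything else is a routine transcription of the argument already written out for \Cref{thm_starlikedomainofconvforBroyden}.
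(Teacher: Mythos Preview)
Your proposal is correct and follows essentially the same approach as the paper: use the two bullet points preceding the theorem to place $u^{-1}$ in a suitable cone via the preceding Newton--like step, and then hand over to Kelley's \cite[Theorem~1.4]{Kelley_ShamanskiiLikeAcceleration} for the remainder. The paper's own proof is more compressed---it simply verifies that the hypotheses of Kelley's theorem are met under \Cref{A2} (noting that the singularity order is $k=1$, that Kelley's set $W(\rho,\theta,\eta)$ collapses to $W_{\uopt}(\rho,\theta)$ because $\delta P_N\equiv 0$, and that Kelley's condition (R) coincides with $P_N(F''(\uopt)(\phi,\phi))\neq 0$)---and treats the Newton--like-step part and the compactness argument as already established by the bullet points and the proof of \Cref{thm_starlikedomainofconvforBroyden}.
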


\begin{proof}
	It is not difficult to check that the prerequisites of
	\cite[Theorem~1.4]{Kelley_ShamanskiiLikeAcceleration} are satisfied. To simplify that check for the reader, let us mention three points:
	\begin{itemize}
		\item The integer $k$ appearing in \cite[Theorem~1.4]{Kelley_ShamanskiiLikeAcceleration} is the order of the singularity, i.e., $k=1$.
		\item The set $W(\rho,\theta,\eta)$ appearing in \cite[Theorem~1.4]{Kelley_ShamanskiiLikeAcceleration} is identical to our $W(\rho,\theta)$ since, in the notation of \cite[Theorem~1.4]{Kelley_ShamanskiiLikeAcceleration}, $\delta P_N \equiv 0$ due to $k=1$.
		\item In the setting under consideration, the two conditions on $P_N F''(\uopt)$ appearing in the assumption \cite[(R)]{Kelley_ShamanskiiLikeAcceleration} are identical to each other and agree with our condition $P_N (F''(\uopt)(\phi,\phi))\neq 0$ imposed as part of \Cref{A2}. \qedhere
	\end{itemize}
\end{proof}


\section{Convergence of the Broyden updates and matrices}\label{sec_main}

In this section we establish convergence of the Broyden matrices for the class of singular problems described by \Cref{A2}.
Specifically, in \Cref{sec_mainr} we show that the Broyden updates converge r-linearly to zero under certain assumptions. Under stronger assumptions we prove in \Cref{sec_mainq} that their convergence is q-linear. Both results imply that the Broyden matrices converge.
The assumptions of \Cref{sec_mainq} are in particular satisfied if the iterates are generated by Algorithm~\ref{alg_broypNl} as described in \Cref{thm_starlikedomainofconvforBroyden}. Thus, the Broyden updates converge q-linearly for all starting points in a domain with density~1 if a preceding Newton step is used and both $\norm{\hat B-F'(\hat u)}{}$ and $\norm{B_0-F'(u^0)}{}$ are sufficiently small. In the numerical experiments we have sometimes  observed that the Broyden iterates and updates converge only r-linearly, and this type of convergence is covered by the results from \Cref{sec_mainr}.

Let us introduce additional notation.
If Algorithm~\ref{alg_broy} generates a sequence $(u^k)$, then necessarily $s^k\neq 0$ for all $k$; we will use this tacitly. 
Let $F:\R^n\rightarrow\R^n$ and let $(u^k)$, $(s^k)$ and $(B_k)$ be generated by Algorithm~\ref{alg_broy}. 
For $k\in\N_0$ we set
\begin{equation*}
	\hat s^k := 
	\frac{s^k}{\norm{s^k}{2}}
	\quad\qquad\text{ and }\qquad\quad
	\varepsilon_k := \frac{\norm{F(u^{k+1})}{2}}{\norm{s^k}{2}}.
\end{equation*}
Since $B_k s^k=-F(u^k)$ and since $\lVert v w^T \rVert = \lVert v\rVert$ for any $v,w\in\R^n$ with $\lVert w\rVert =1$, we observe that $\varepsilon_k=\frac{\norm{F(u^{k+1})-F(u^k)-B_k s^k}{2}}{\norm{s^k}{2}}=\norm{B_{k+1}-B_k}{2}$, i.e., $\varepsilon_k$ is the norm of the Broyden update.
If, in addition, the limit $\uopt:=\lim_{k\to\infty} u^k$ exists and $F$ is differentiable at $\uopt$, then we set 
\begin{equation*}
	E_k:= B_k - F'(\uopt), \quad k\in\N_0.
\end{equation*}

\subsection{R-linear convergence of the Broyden updates}\label{sec_mainr}

We first establish sufficient conditions for r-linear convergence of the Broyden updates to zero. 
Then we prove some convergence properties of Broyden's method which allow us to conclude that the sufficient conditions are satisfied 
if $(u^k)$ is generated according to \Cref{thm_fundlemsingularJacobian} or \Cref{thm_starlikedomainofconvforBroyden}.

\subsubsection{The sufficient conditions}

The sufficient conditions will follow from the next two lemmas.  

\begin{lemma}\label{lem_convcondbasedonepsilons}
	Let $(\beta_k)\subset[0,\infty)$ and suppose there are constants $C,\gamma>0$ and a sequence $(\alpha_k)\subset(0,\infty)$ 
	such that 
	\begin{equation*}
		\limsup_{k\to\infty}\frac{\beta_k}{\alpha_k^\gamma}\leq C \qquad\text{ and }\qquad
		\limsup_{k\to\infty}\sqrt[k]{\alpha_k} \leq\bar\kappa
	\end{equation*}
	hold for some $\bar\kappa\in[0,\infty)$. Then $\limsup_{k\to\infty}\sqrt[k]{\beta_k}\leq\bar\kappa^\gamma$. 
\end{lemma}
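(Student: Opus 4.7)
The plan is a direct argument from the definition of $\limsup$. The idea is: both hypotheses give, for every $\varepsilon>0$ and all sufficiently large $k$, explicit bounds on $\beta_k$ in terms of $k$, and extracting the $k$-th root then lets $\varepsilon$ shrink to zero.

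First I would fix an arbitrary $\varepsilon>0$. From the first hypothesis there is $k_1$ such that $\beta_k \leq (C+\varepsilon)\,\alpha_k^{\gamma}$ for all $k\geq k_1$. From the second hypothesis there is $k_2$ such that $\sqrt[k]{\alpha_k}\leq \bar\kappa+\varepsilon$, i.e., $\alpha_k\leq (\bar\kappa+\varepsilon)^{k}$, for all $k\geq k_2$. Combining these for $k\geq\max\{k_1,k_2\}$ yields
\begin{equation*}
    0\leq \beta_k \leq (C+\varepsilon)\,(\bar\kappa+\varepsilon)^{k\gamma}.
\end{equation*}

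Next I would take $k$-th roots on both sides, using $\beta_k\geq 0$ and $(C+\varepsilon)>0$, to obtain
\begin{equation*}
    \sqrt[k]{\beta_k}\leq (C+\varepsilon)^{1/k}\,(\bar\kappa+\varepsilon)^{\gamma}.
\end{equation*}
Since $(C+\varepsilon)^{1/k}\to 1$ as $k\to\infty$, taking the $\limsup$ gives $\limsup_{k\to\infty}\sqrt[k]{\beta_k}\leq (\bar\kappa+\varepsilon)^{\gamma}$. As $\varepsilon>0$ was arbitrary and $x\mapsto x^\gamma$ is continuous on $[0,\infty)$ for $\gamma>0$, letting $\varepsilon\to 0^+$ yields the claim $\limsup_{k\to\infty}\sqrt[k]{\beta_k}\leq \bar\kappa^{\gamma}$.

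No serious obstacle is anticipated; the argument is entirely routine. The only minor points to watch are that the first hypothesis involves a $\limsup$ rather than a $\lim$, so the bound $\beta_k\leq (C+\varepsilon)\alpha_k^\gamma$ only holds eventually; that the case $\bar\kappa=0$ is covered since $(\bar\kappa+\varepsilon)^\gamma=\varepsilon^\gamma\to 0$; and that the possibility $\beta_k=0$ causes no issue because $\sqrt[k]{0}=0$.
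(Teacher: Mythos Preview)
Your proof is correct and follows essentially the same approach as the paper: both arguments fix an arbitrary slack parameter, use the two hypotheses to obtain $\beta_k\leq(\text{const})\,(\bar\kappa+\text{slack})^{k\gamma}$ for all large $k$, take $k$-th roots, and let the slack tend to zero. The only cosmetic difference is that the paper uses the fixed bound $2C$ in place of your $C+\varepsilon$ for the first hypothesis.
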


\begin{proof}
	The assumptions imply that for any $\delta>0$ there is $K\in\N_0$ such that 
	$\sqrt[k]{\beta_k}\leq \sqrt[k]{2C}\sqrt[k]{\bar\kappa+\delta}^\gamma$ holds for all $k\geq K$. 
	This yields the claim. 
\end{proof}

\begin{lemma}\label{lem_convlem2}
	Let $(v^k)\subset\R^n$ be a sequence satisfying $\limsup_{k\to\infty}\sqrt[k]{\norm{v^k}{}} \leq\bar\kappa$
	for some $\bar\kappa\in[0,\infty)$. Then $\limsup_{k\to\infty}\sqrt[k]{\norm{v^{k+1}-v^k}{}}\leq\bar\kappa$. 
\end{lemma}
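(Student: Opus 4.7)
The plan is to use the triangle inequality together with an $\varepsilon$-style unpacking of the $\limsup$ hypothesis, mirroring the structure of the proof of \Cref{lem_convcondbasedonepsilons} that immediately precedes the statement.

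First I would fix an arbitrary $\delta>0$. By definition of $\limsup$ applied to the hypothesis $\limsup_{k\to\infty}\sqrt[k]{\norm{v^k}{}}\leq\bar\kappa$, there exists $K\in\N_0$ such that $\norm{v^k}{}\leq(\bar\kappa+\delta)^k$ for every $k\geq K$. Then for every such $k$, the triangle inequality gives
\begin{equation*}
	\norm{v^{k+1}-v^k}{}\leq\norm{v^{k+1}}{}+\norm{v^k}{}\leq(\bar\kappa+\delta)^{k+1}+(\bar\kappa+\delta)^k=(\bar\kappa+\delta)^k\bigl(\bar\kappa+\delta+1\bigr).
\end{equation*}

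Taking $k$-th roots on both sides yields
\begin{equation*}
	\sqrt[k]{\norm{v^{k+1}-v^k}{}}\leq(\bar\kappa+\delta)\sqrt[k]{\bar\kappa+\delta+1}.
\end{equation*}
Since $\sqrt[k]{\bar\kappa+\delta+1}\to 1$ as $k\to\infty$, passing to the $\limsup$ gives $\limsup_{k\to\infty}\sqrt[k]{\norm{v^{k+1}-v^k}{}}\leq\bar\kappa+\delta$. As $\delta>0$ was arbitrary, letting $\delta\to 0^+$ delivers the claim.

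There is no real obstacle here: the only subtlety is the harmless case $\bar\kappa=0$, but the argument above handles it uniformly since $\bar\kappa+\delta>0$ for every $\delta>0$. I would also note that no assumption on $\bar\kappa\leq 1$ is used, so the lemma applies equally to diverging sequences, which matches its intended use in \Cref{cor_rlinearrateforupdates} where it will be combined with \Cref{lem_convcondbasedonepsilons} to transfer r-linear convergence of $(u^k-\uopt)$ to the Broyden updates $B_{k+1}-B_k$.
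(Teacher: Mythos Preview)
Your proof is correct and follows essentially the same approach as the paper: both use the triangle inequality $\norm{v^{k+1}-v^k}{}\leq\norm{v^{k+1}}{}+\norm{v^k}{}$ (the paper phrases it as $2\max\{\norm{v^{k+1}}{},\norm{v^k}{}\}$) together with the observation that the index shift from $k$ to $k+1$ does not affect the $\limsup$ of the $k$-th root. The only difference is that you spell out the $\delta$-argument explicitly, whereas the paper leaves this routine step implicit.
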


\begin{proof}
	From
	$\limsup_{k\to\infty}\sqrt[k]{\norm{v^k}{}} \leq\bar\kappa$ we infer 
	$\limsup_{k\to\infty}\sqrt[k]{\norm{v^{k+1}}{}} \leq\bar\kappa$. The claim thus follows by use of 
	$\norm{v^{k+1}-v^k}{} \leq 2\max\{\norm{v^{k+1}}{},\norm{v^k}{}\}$. 
\end{proof}

We obtain sufficient conditions for r-linear convergence of $(\varepsilon_k)$ to zero. 

\begin{corollary}\label{cor_rlinearrateforupdates}
	Let $F:\R^n\rightarrow\R^n$ and $\uopt\in\R^n$. Let $(u^k)$ be generated by Algorithm~\ref{alg_broy} and suppose that there are $C,\gamma>0$ and $\kappa\in[0,1)$ such that
	\begin{equation*}
		\limsup_{k\to\infty} \, \sqrt[k]{\norm{u^{k}-\uopt}{2}}\leq \kappa
		\qquad\text{ and }\qquad
		\limsup_{k\to\infty} \, \frac{\norm{F(u^{k+1})}{2}}{\norm{s^k}{2}^{1+\gamma}} \leq C.
	\end{equation*}
	Then $\limsup_{k\to\infty}\sqrt[k]{\varepsilon_k}\leq\kappa^\gamma$ and $\sum_k\varepsilon_k<\infty$.
\end{corollary}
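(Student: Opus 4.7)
The plan is to reduce the corollary to a mechanical assembly of the two preceding lemmas. The key observation is the product decomposition
$$\varepsilon_k \;=\; \frac{\norm{F(u^{k+1})}{2}}{\norm{s^k}{2}^{1+\gamma}} \cdot \norm{s^k}{2}^\gamma,$$
in which the second hypothesis bounds the first factor by $C$ asymptotically, while the second factor should decay geometrically because the iterates, and hence the increments $s^k = u^{k+1}-u^k$, do so.

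To make this precise I would set $\alpha_k := \norm{s^k}{2}$ and $\beta_k := \varepsilon_k$, so that the second hypothesis reads $\limsup_{k\to\infty} \beta_k/\alpha_k^\gamma \le C$, which is exactly the first premise of \Cref{lem_convcondbasedonepsilons}. The remaining premise $\limsup_{k\to\infty} \sqrt[k]{\alpha_k} \le \kappa$ is obtained from \Cref{lem_convlem2} applied to $v^k := u^k-\uopt$: since $s^k = v^{k+1}-v^k$ and $\limsup_{k\to\infty} \sqrt[k]{\norm{v^k}{2}} \le \kappa$ by the first hypothesis, the lemma yields $\limsup_{k\to\infty} \sqrt[k]{\norm{s^k}{2}} \le \kappa$. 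Note that $\alpha_k > 0$ because $s^k \ne 0$ whenever Algorithm~\ref{alg_broy} generates a sequence, as pointed out at the beginning of Section~\ref{sec_main}. Invoking \Cref{lem_convcondbasedonepsilons} with $\bar\kappa = \kappa$ then delivers the r-linear bound $\limsup_{k\to\infty} \sqrt[k]{\varepsilon_k} \le \kappa^\gamma$.

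For the summability statement the root test suffices: since $\kappa^\gamma \in [0,1)$, I would pick any $\rho \in (\kappa^\gamma, 1)$, observe that $\varepsilon_k \le \rho^k$ for all sufficiently large $k$, and conclude $\sum_k \varepsilon_k < \infty$ by comparison with a convergent geometric series. I do not foresee any hard step here; all the conceptual work has already been done in the two preparatory lemmas, and the only remaining ingredient is spotting the product decomposition of $\varepsilon_k$ that cleanly separates a bounded factor from a geometrically decaying one.
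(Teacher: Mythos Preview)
Your proposal is correct and matches the paper's proof essentially step for step: apply \Cref{lem_convlem2} to $v^k=u^k-\uopt$ to obtain $\limsup_{k\to\infty}\sqrt[k]{\norm{s^k}{2}}\le\kappa$, then invoke \Cref{lem_convcondbasedonepsilons} with $\alpha_k=\norm{s^k}{2}$ and $\beta_k=\varepsilon_k$, and finally deduce summability from $\kappa^\gamma<1$ via the root test.
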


\begin{proof}
	The claim $\sum_k\varepsilon_k<\infty$ follows from $\limsup_{k\to\infty}\sqrt[k]{\varepsilon_k}\leq\kappa^\gamma<1$. To establish the latter, we apply 
	\Cref{lem_convlem2} to $(v^k)\equiv (u^k-\uopt)$. This yields $\limsup_{k\to\infty}\sqrt[k]{\norm{s^k}{}}\leq\kappa$.
	The claim thus follows from \Cref{lem_convcondbasedonepsilons}, applied to
	$(\alpha_k)\equiv(\norm{s^k}{})$ and $(\beta_k)\equiv (\varepsilon_k)$. 
\end{proof}	

\begin{remark}
	\Cref{cor_rlinearrateforupdates} provides sufficient conditions for 
	$\sum_k\varepsilon_k<\infty$ to hold, which due to $\varepsilon_{k}=\norm{B_{k+1}-B_k}{2}$ are also sufficient conditions for convergence of 
	$(B_k)$. Since the available results for convergence of $(B_k)$ are mostly based on uniform linear independence of $(\hat s^k)$, see the discussion in \Cref{sec_introconvBrmatrices}, we stress that \Cref{cor_rlinearrateforupdates} does not rely on uniform linear independence of $(\hat s^k)$.
	In fact, we will use it to demonstrate convergence of $(B_k)$ in a setting where uniform linear independence is provably violated.
	Note that the existence of $F'(\uopt)$ is also not required, let alone its regularity or singularity.
	Finally, we observe that \Cref{cor_rlinearrateforupdates} implies $u^k\to \uopt$, $s^k\to 0$ and $F(u^k)\to 0$ for $k\to\infty$.
\end{remark}

\subsubsection{Convergence properties of Broyden's method}

To show that the sufficient conditions for r-linear convergence of the Broyden updates are satisfied, we establish the following convergence theorem that enriches \cite{DeckerKelley_BroydenSingularJacobian} with additional convergence properties of $(u^k)$, $(s^k)$ and $(F(u^k))$. 

\begin{theorem}\label{thm_convsingularcase}
	Let \Cref{A2} hold. Let $(u^k)$ be generated by Algorithm~\ref{alg_broy} and suppose that it satisfies
	\begin{equation}\label{eq_conclestforsingJac}
		\limsup_{k\to\infty}\frac{\norm{u^{k+1}-\uopt}{2}}{\norm{u^k-\uopt}{2}} \leq \kappa 
		\qquad\text{ and }\qquad
		\lim_{k\to\infty}\frac{\norm{P_X(u^k-\uopt)}{2}}{\norm{P_N(u^k-\uopt)}{2}^2} = 0
	\end{equation}
	for some $\kappa\in[0,1)$.
	Then:
	\begin{enumerate}
		\item[1)] There holds $\min\{\norm{\hat s^k - \phi}{2},\norm{\hat s^k + \phi}{2}\}=o(\norm{u^k-\uopt}{2})$ for $k\to\infty$. 
		\item[2)] There holds 
		\begin{equation*}
			\lim_{k\to\infty}\frac{\norm{F(u^k)}{2}}{\norm{u^k-\uopt}{2}^2}
			= \norm{F''(\uopt)(\phi,\phi)}{2}.
		\end{equation*}	
		\item[3)] There exists a constant $C>0$ such that for all $k\in\N_0$
		\begin{equation*}
			\norm{F(u^{k+1})}{2} \leq C\norm{s^k}{2}^2.
		\end{equation*} 
		In particular, $\delta_k:=\frac{\log(\norm{F(u^k)}{2})}{\log(\norm{s^{k-1}}{2})}$, $k\in\N$, satisfies $\liminf_{k\to\infty}\delta_k\geq 2$. 
	\end{enumerate}
	In particular, the statements 1)--3) hold if $(u^k)$ is generated according to \Cref{thm_fundlemsingularJacobian} or \Cref{thm_starlikedomainofconvforBroyden}.
\end{theorem}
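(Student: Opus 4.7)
The plan is to exploit the fact that condition \eqref{eq_conclestforsingJac} forces $u^k-\uopt$ to align almost entirely with the one-dimensional nullspace $N=\spann{\phi}$, and then to read off all three statements by Taylor expansion of $F$ at $\uopt$. Concretely, I would decompose $u^k-\uopt=\alpha_k\phi+r^k$ with $\alpha_k\in\R$ and $r^k:=P_X(u^k-\uopt)$. The second part of \eqref{eq_conclestforsingJac} gives $\norm{r^k}{2}=o(\alpha_k^2)=o(|\alpha_k|)$, hence $\norm{u^k-\uopt}{2}=|\alpha_k|(1+o(1))$, and the first part yields $\limsup_k|\alpha_{k+1}|/|\alpha_k|\leq\kappa$. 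The key elementary consequence is the linear-in-$|\alpha_k|$ lower bound
\begin{equation*}
|\alpha_{k+1}-\alpha_k|\geq|\alpha_k|-|\alpha_{k+1}|\geq(1-\kappa-\varepsilon)|\alpha_k|,
\end{equation*}
valid for every $\varepsilon>0$ and all sufficiently large $k$, irrespective of the signs of $\alpha_k$ and $\alpha_{k+1}$.

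With this setup, statement 1) is direct: from $s^k=(\alpha_{k+1}-\alpha_k)\phi+(r^{k+1}-r^k)$ the $X$-component of $s^k$ has norm $o(\alpha_k^2)$ while the $N$-component has magnitude at least of order $|\alpha_k|$, so normalization gives that $\hat s^k$ differs from $\pm\phi$ (with sign determined by $\alpha_{k+1}-\alpha_k$) by a vector of norm $o(|\alpha_k|)=o(\norm{u^k-\uopt}{2})$. For statement 2) I would Taylor expand $F$ at $\uopt$; the identity $F'(\uopt)\phi=0$ collapses the linear term to $F'(\uopt)r^k=o(\alpha_k^2)$, and the mixed and pure-$r^k$ contributions in the quadratic term are absorbed into $o(\alpha_k^2)$, leaving only the $\alpha_k^2 F''(\uopt)(\phi,\phi)$ contribution and yielding the stated limit. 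For statement 3), the setup gives $\norm{u^k-\uopt}{2}\leq C_1\norm{s^k}{2}$ for large $k$, and the q-linear ratio then implies $\norm{u^{k+1}-\uopt}{2}\leq C_2\norm{s^k}{2}$; inserting this into statement 2) produces $\norm{F(u^{k+1})}{2}\leq C\norm{s^k}{2}^2$. The assertion $\liminf_k\delta_k\geq 2$ follows by taking logarithms and dividing by the negative quantity $\log\norm{s^{k-1}}{2}\to-\infty$. The final sentence of the theorem is then immediate from \eqref{eq_conclestforsingJac2}, which is exactly \eqref{eq_conclestforsingJac} with $\kappa=(\sqrt5-1)/2$.

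The step I expect to be the main obstacle is securing the linear-in-$|\alpha_k|$ lower bound on $|\alpha_{k+1}-\alpha_k|$: only a Euclidean-norm q-linear ratio is assumed in \eqref{eq_conclestforsingJac}, and one must rule out that $\alpha_{k+1}$ and $\alpha_k$ nearly coincide infinitely often, since that would allow the quadratically small $X$-component of $s^k$ to dominate after normalization and destroy statement 1). The strict inequality $\kappa<1$ together with the two-sided bound $|\alpha_{k+1}-\alpha_k|\geq\bigl||\alpha_k|-|\alpha_{k+1}|\bigr|$ cleanly resolves this, and from that point on the argument reduces to routine Taylor arithmetic and the elementary log-ratio computation for $\delta_k$.
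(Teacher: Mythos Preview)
Your proposal is correct and follows essentially the same route as the paper: both arguments hinge on (i) the $X$-component of $u^k-\uopt$ being $o$ of the square of the $N$-component, (ii) the q-linear ratio forcing $\norm{s^k}{2}\geq c\norm{u^k-\uopt}{2}$ via the reverse triangle inequality, and (iii) Taylor expansion at $\uopt$ with the linear term killed on $N$. The only cosmetic difference is that you work with the scalar coordinate $\alpha_k$ along $\phi$ and read off $\hat s^k\approx\operatorname{sign}(\alpha_{k+1}-\alpha_k)\phi$ directly, whereas the paper first reduces part~1) to bounding $\norm{P_X\hat s^k}{2}$ via the identity $\norm{P_N(\hat s^k)\pm\phi}{2}=\tfrac12\norm{\hat s^k\pm\phi}{2}^2$ (after passing WLOG to $X=N^\perp$); your direct argument is arguably cleaner and does not need that orthogonality reduction.
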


\begin{proof}
	We will use tacitly that the norms of $P_X$ and $P_N$ are bounded by $1$ and that $P_X(v)+P_N(v)=v$ for all $v\in\R^n$.
	In addition, we can assume without loss of generality that $X=N^\perp$, see \cite[beginning of the proof of Theorem~1.11]{DeckerKelley_BroydenSingularJacobian}, so 
	$P_N(v)=v^T\phi \phi$ for all $v\in\R^n$.\\
	\textbf{Proof of 1):} 
	For all $k\in\N_0$ there holds
	\begin{equation*}
		\norm{P_N(\hat s^k)\pm\phi}{2}
		= \norm{(\hat s^k)^T \phi\phi \pm \phi}{2}
		= 1 \pm (\hat s^k)^T\phi
		= \frac12\norm{\hat s^k\pm\phi}{2}^2,
	\end{equation*}
	which implies
	\begin{equation*}
		\norm{\hat s^k\pm\phi}{2}
		= \norm{P_X(\hat s^k) + P_N(\hat s^k) \pm\phi}{2}
		\leq 
		\norm{P_X(\hat s^k)}{2} + \frac12\norm{\hat s^k \pm\phi}{2}^2,
	\end{equation*}
	hence
	\begin{equation*}
		\left(1-\frac12\norm{\hat s^k\pm\phi}{2}\right)\norm{\hat s^k\pm\phi}{2}
		\leq \norm{P_X(\hat s^k)}{2}.
	\end{equation*}
	As $\min\{\norm{a+b}{2},\norm{a-b}{2}\} \leq \sqrt2$ for all $a,b\in\R^n$ with $\norm{a}{2}=\norm{b}{2}=1$, 
	we infer that
	\begin{equation*}
		\left(1-\frac{\sqrt2}{2}\right)\min\Bigl\{\norm{\hat s^k-\phi}{2},\norm{\hat s^k+\phi}{2}\Bigr\}
		\leq \norm{P_X(\hat s^k)}{2}.
	\end{equation*}
	It is therefore enough to show
	$\norm{P_X (\hat s^k)}{2}=o(\norm{u^k-\uopt}{2})$ for $k\to\infty$. 
	Denoting
	\begin{equation}\label{def_omegak}
		\omega_k:=\frac{\norm{P_X(u^k-\uopt)}{2}}{\norm{P_N(u^k-\uopt)}{2}^2}, \quad k\in\N_0, 
	\end{equation} 
	we have $\lim_{k\to\infty}\omega_k=0$ by \eqref{eq_conclestforsingJac}. Moreover, for all $k$ large enough there holds 
	\begin{equation*}
		\begin{split}
			\norm{P_X(s^k)}{2}
			& = \norm{P_X (u^{k+1}-u^k)}{2}
			\leq \norm{P_X (u^{k+1}-\uopt)}{2}
			+ \norm{P_X (u^k-\uopt)}{2}\\
			& = \omega_{k+1}\norm{P_N (u^{k+1}-\uopt)}{2}^2+\omega_k\norm{P_N (u^k-\uopt)}{2}^2
			\leq \left(\omega_{k+1}+\omega_k\right) \norm{u^k-\uopt}{2}^2,
		\end{split}
	\end{equation*}
	where we used the q-linear convergence of $(u^k)$ to obtain the last inequality.
	For sufficiently large $k$ we thus have 
	\begin{equation}\label{eq_auxsingjac}
		\norm{P_X (\hat s^k)}{2}
		= \frac{\norm{P_X(s^k)}{2}}{\norm{s^k}{2}}
		\leq \frac{\omega_{k+1}+\omega_k}{1-\kappa_\delta}\norm{u^k-\uopt}{2},
	\end{equation}
	where $\kappa_\delta:=\kappa+\delta$ for an arbitrary $\delta\in(0,1-\kappa)$ and we used the estimate $\norm{u^k-\uopt}{2}\leq \frac{1}{1-\kappa_\delta}\norm{s^k}{2}$ that holds for all sufficiently large $k$ due to
	\begin{equation}\label{eq_auxjacosing}
		\left(1-\kappa_\delta\right)\norm{u^k-\uopt}{2}
		\leq \norm{u^k-\uopt}{2} - \norm{u^{k+1}-\uopt}{2} 
		\leq \norm{u^{k+1}-u^k}{2} 
		= \norm{s^k}{2}.
	\end{equation}
	In view of $\lim_{k\to\infty}\omega_k=0$, \eqref{eq_auxsingjac} concludes the proof of 1).\\	
	\textbf{Proof of 2):} 
	For $k\in\N_0$ we set $\tilde u^{k+1}:=u^{k+1}-\uopt$. 
	Using $F(\uopt)=0$ we have 
	\begin{equation}\label{eq_TEsuffJacob}
		F(u^{k+1}) = F'(\uopt)(\tilde u^{k+1}) + F''(\uopt)(\tilde u^{k+1},\tilde u^{k+1}) + o\bigl(\norm{\tilde u^{k+1}}{2}^2\bigr).
	\end{equation}
	Since $F'(\uopt)(P_N(v)) = 0$ for all $v\in\R^n$ due to $N=\ker(F'(\uopt))$, \eqref{def_omegak} yields 
	\begin{equation*}
		\norm{F'(\uopt)(\tilde u^{k+1})}{2}
		= \norm{F'(\uopt)(P_X(\tilde u^{k+1}))}{2}
		\leq \norm{F'(\uopt)}{2}\omega_{k+1}\norm{\tilde u^{k+1}}{2}^2 = o\bigl(\norm{\tilde u^{k+1}}{2}^2\bigr)
	\end{equation*}
	for $k\to\infty$. 
	In view of \eqref{eq_TEsuffJacob} this shows that it is enough to demonstrate 
	\begin{equation*}
		\lim_{k\to\infty} \, \Bigl\lvert\norm{F''(\uopt)(\hat u^{k+1},\hat u^{k+1})}{2}
		- \norm{F''(\uopt)(\phi,\phi)}{2}\Bigr\rvert = 0,
	\end{equation*}
	where $\hat u^{k+1}:=\frac{\tilde u^{k+1}}{\norm{\tilde u^{k+1}}{2}}$. For this limit to vanish it suffices that 
	\begin{equation}\label{eq_finconcljs}
		\lim_{k\to\infty} \min\Bigl\{\norm{\hat u^{k+1}-\phi}{2},\norm{\hat u^{k+1}+\phi}{2}\Bigr\} = 0.
	\end{equation} 
	By arguments similar to those in the proof of 1) we derive
	\begin{equation*}
		\min\Bigl\{\norm{\hat u^{k+1}-\phi}{2},\norm{\hat u^{k+1}+\phi}{2}\Bigr\}
		= o\bigl(\norm{\tilde u^{k+1}}{2}\bigr)
	\end{equation*}
	for $k\to\infty$, which implies \eqref{eq_finconcljs}.\\
	\textbf{Proof of 3):} 
	Since $\norm{u^{k+1}-\uopt}{2}^2 \leq (\frac{\kappa_\delta}{1-\kappa_\delta}\norm{s^k}{2})^2$ by \eqref{eq_auxjacosing} for all $k$ large enough, the first inequality follows from part~2). 
	The claim concerning $(\delta_k)$ is obvious. 
	\\\textbf{Proof of the ``In particular, ...'' claim:} 
	If $(u^k)$ is generated according to \Cref{thm_fundlemsingularJacobian} or \Cref{thm_starlikedomainofconvforBroyden}, then 
	$(u^k)$ is generated by Algorithm~\ref{alg_broy} and satisfies \eqref{eq_conclestforsingJac2}, hence 
	\eqref{eq_conclestforsingJac} holds for $\kappa=(\sqrt5 -1)/2\in(0,1)$.
	As we have already established, this implies 1)--3).
\end{proof}	

\begin{remark}\label{rem_errorboundcond}
	Broyden's method and Newton--type methods can be studied under a so-called \emph{local error bound condition}, cf. e.g.
	\cite{Griewank_87,BroydensMethodInHilbertSpacesSetValued_ArtachoBelyakovDontchevLopez,DontRock_InexactNewton}.
	For single-valued mappings, this condition requires the existence of $L\geq 0$ such that 
	$\norm{u-v}{2}\leq L\norm{F(u)-F(v)}{2}$ for all $u,v$ near the root $\uopt$ and is therefore equivalent to \emph{metric regularity} \cite{DontRock,Ioffe_MetricRegularitySurvey}.
	The special case $v=\uopt$ also appears frequently and corresponds to \emph{metric subregularity}. While invertibility of $F'(\uopt)$ implies metric subregularity, 
	it can also hold if $F'(\uopt)$ is singular or does not exist. 
	The singular problems considered in this work are, however, not covered by metric subregularity. Indeed, by \Cref{A2} we have
	$F''(\uopt)(\phi,\phi)\neq 0$, hence part~2) of \Cref{thm_convsingularcase} readily implies that $L_k:=\norm{u^k-\uopt}{2}/\norm{F(u^k)-F(\uopt)}{2}$ goes to infinity.
\end{remark}	

\subsubsection{The main result}

As one of the main results of this work we show that the Broyden updates converge r-linearly to zero if $(u^k)$ is generated according to \Cref{thm_fundlemsingularJacobian} or \Cref{thm_starlikedomainofconvforBroyden}.

\begin{theorem}\label{thm_rlinconvofBrupdates}
	Let $F:\R^n\rightarrow\R^n$ and $\uopt\in\R^n$. Let $(u^k)$ be generated by Algorithm~\ref{alg_broy} and suppose that there are constants $C>0$ and  $\kappa\in[0,1)$ such that
	\begin{equation}\label{eq_rlinconvassumpineq}
		\limsup_{k\to\infty} \, \sqrt[k]{\norm{u^k-\uopt}{2}}\leq \kappa
		\qquad\text{ and }\qquad
		\limsup_{k\to\infty} \, \frac{\norm{F(u^{k+1})}{2}}{\norm{s^k}{2}^2} \leq C.
	\end{equation} 
	Then
	\begin{equation}\label{eq_rlinconvBroyupd}
		\limsup_{k\to\infty} \, \sqrt[k]{\varepsilon_k}\leq\kappa
		\qquad\text{ and }\qquad 
		\sum_k\varepsilon_k<\infty.
	\end{equation}
	In particular, \eqref{eq_rlinconvBroyupd} holds for $\kappa=\frac{\sqrt5 - 1}{2}$ if $(u^k)$ is generated according to \Cref{thm_fundlemsingularJacobian} or \Cref{thm_starlikedomainofconvforBroyden}.
\end{theorem}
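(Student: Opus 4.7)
The statement is essentially a packaging of two previously proved tools, so the plan is almost entirely bookkeeping.

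First, I would observe that the hypothesis \eqref{eq_rlinconvassumpineq} is precisely the specialization $\gamma=1$ of the assumption in \Cref{cor_rlinearrateforupdates}: the quadratic exponent $\|s^k\|_2^{2}$ corresponds to $1+\gamma$ with $\gamma=1$, and the root-test bound on $\|u^k-\uopt\|_2$ is identical. Invoking \Cref{cor_rlinearrateforupdates} therefore yields at once $\limsup_{k\to\infty}\sqrt[k]{\varepsilon_k}\leq\kappa^{\gamma}=\kappa$. Since $\kappa<1$, the Cauchy root test implies that $\sum_{k}\varepsilon_k$ converges, proving both conclusions in \eqref{eq_rlinconvBroyupd}.

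For the ``In particular'' claim I would verify, for sequences $(u^k)$ generated according to \Cref{thm_fundlemsingularJacobian} or \Cref{thm_starlikedomainofconvforBroyden}, that the two hypotheses in \eqref{eq_rlinconvassumpineq} hold with $\kappa=\frac{\sqrt5-1}{2}$. The first follows from the q-linear ratio limit in \eqref{eq_conclestforsingJac2}: a standard argument (for any $\delta>0$ the ratio lies below $\kappa+\delta$ from some $k_0$ onward, hence $\|u^k-\uopt\|_2\leq (\kappa+\delta)^{k-k_0}\|u^{k_0}-\uopt\|_2$, whence $\limsup_k\sqrt[k]{\|u^k-\uopt\|_2}\leq\kappa+\delta$; letting $\delta\to 0$ gives $\kappa$). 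The second hypothesis is exactly the content of \Cref{thm_convsingularcase}, part~3), which asserts the existence of a constant $C>0$ with $\|F(u^{k+1})\|_2\leq C\|s^k\|_2^2$ for all $k$; this applies because \Cref{thm_convsingularcase} explicitly covers both of the two generation schemes in its final clause.

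There is no real obstacle: the whole work has been done in \Cref{cor_rlinearrateforupdates} and \Cref{thm_convsingularcase}. The only minor care needed is the routine conversion of q-linear convergence (a statement about successive ratios) into the root-type bound that \Cref{cor_rlinearrateforupdates} requires, but this is immediate from the observation above. Consequently the proof reduces to citing the two earlier results in the correct order.
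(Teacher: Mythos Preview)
Your proposal is correct and matches the paper's proof essentially verbatim: apply \Cref{cor_rlinearrateforupdates} with $\gamma=1$ for the main assertion, then for the ``In particular'' part invoke \eqref{eq_conclestforsingJac2} (converting the q-linear ratio limit to the root-test bound) and \Cref{thm_convsingularcase}~3). The only difference is that you spell out the routine q-to-r conversion, which the paper leaves implicit.
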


\begin{proof}
	The inequalities \eqref{eq_rlinconvBroyupd} follow from \Cref{cor_rlinearrateforupdates} (taking $\gamma=1$ in that result).
	If $(u^k)$ is generated according to \Cref{thm_fundlemsingularJacobian} or \Cref{thm_starlikedomainofconvforBroyden}, then 
	$(u^k)$ is generated by Algorithm~\ref{alg_broy} and 
	satisfies \eqref{eq_conclestforsingJac2}, which implies that the first inequality in \eqref{eq_rlinconvassumpineq} holds for $\kappa=(\sqrt5 -1)/2\in(0,1)$. 
	Since \Cref{thm_convsingularcase}~3) holds as well, we infer that the second inequality in \eqref{eq_rlinconvassumpineq} is also satisfied. 
	As we have already established, this implies \eqref{eq_rlinconvBroyupd}. 
\end{proof}

\begin{remark}
	In \Cref{thm_qlinearrateforupdates} we improve the r-linear convergence of $(\varepsilon_k)$ to q-linear convergence. 
	We stress that \Cref{thm_rlinconvofBrupdates} requires significantly weaker assumptions than \Cref{thm_qlinearrateforupdates}.
\end{remark}

\subsection{Q-linear convergence of the Broyden updates}\label{sec_mainq}

To show that the Broyden updates converge q-linearly to zero, we first
establish sufficient conditions for this q-linear convergence. 
Then we demonstrate that these sufficient conditions are satisfied if $(u^k)$ is generated according to \Cref{thm_fundlemsingularJacobian} or \Cref{thm_starlikedomainofconvforBroyden}.

\subsubsection{The sufficient conditions}

We prove that if $(u^k)$ converges q-linearly with an \emph{asymptotic} q-factor and satisfies two additional assumptions, then the Broyden updates converge q-linearly with the same asymptotic q-factor. 

\begin{lemma}\label{lem_qlinearrateofupdates}
	Let \Cref{A2} hold and let $(u^k)$ be generated by Algorithm~\ref{alg_broy}. Suppose that 
	\begin{equation}\label{eq_assthm32}
		\lim_{k\to\infty}\frac{\norm{u^{k+1}-\uopt}{2}}{\norm{u^k-\uopt}{2}} = \kappa
		\qquad\text{ and }\qquad
		\lim_{k\to\infty}\frac{\norm{P_X(u^k-\uopt)}{2}}{\norm{P_N(u^k-\uopt)}{2}^2} = 0
	\end{equation}
	for some $\kappa\in(0,1)$. Moreover, suppose that 
	\begin{equation}\label{eq_limexpos}
		\lim_{k\to\infty}\frac{\norm{s^k}{2}}{\norm{u^k-\uopt}{2}} 
	\end{equation}
	exists and is positive. 
	Then
	\begin{equation*}
		\lim_{k\to\infty}\frac{\varepsilon_{k+1}}{\varepsilon_k} = \kappa.
	\end{equation*}
\end{lemma}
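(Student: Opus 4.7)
The plan is to derive a precise asymptotic expression for $\varepsilon_k$ in terms of $\norm{u^k-\uopt}{2}$ and then exploit the q-linear convergence of the iterates with rate exactly $\kappa$ to compute the ratio $\varepsilon_{k+1}/\varepsilon_k$. The key observation is that \Cref{thm_convsingularcase} is directly applicable: assumption \eqref{eq_assthm32} matches its hypothesis \eqref{eq_conclestforsingJac}, so part~2) of that theorem gives the asymptotic
\begin{equation*}
    \norm{F(u^k)}{2} = \bigl(c+o(1)\bigr)\norm{u^k-\uopt}{2}^2,\qquad c:=\norm{F''(\uopt)(\phi,\phi)}{2},
\end{equation*}
where $c>0$ by \Cref{A2} (since $P_N F''(\uopt)(\phi,\phi)\neq 0$ implies $F''(\uopt)(\phi,\phi)\neq 0$).

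Writing $\tau:=\lim_{k\to\infty}\norm{s^k}{2}/\norm{u^k-\uopt}{2}\in(0,\infty)$, which exists and is positive by \eqref{eq_limexpos}, I combine these two facts to obtain
\begin{equation*}
    \varepsilon_k = \frac{\norm{F(u^{k+1})}{2}}{\norm{s^k}{2}}
    = \frac{(c+o(1))\norm{u^{k+1}-\uopt}{2}^2}{\norm{s^k}{2}}
    = \Bigl(\frac{c}{\tau}+o(1)\Bigr)\frac{\norm{u^{k+1}-\uopt}{2}^2}{\norm{u^k-\uopt}{2}}.
\end{equation*}
Invoking now the first limit in \eqref{eq_assthm32}, namely $\norm{u^{k+1}-\uopt}{2}/\norm{u^k-\uopt}{2}\to\kappa$, this simplifies to
\begin{equation*}
    \frac{\varepsilon_k}{\norm{u^k-\uopt}{2}} \;\longrightarrow\; \frac{c\kappa^2}{\tau} \;\in\;(0,\infty).
\end{equation*}

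Once this positive limit is in hand, the conclusion follows by the trivial identity
\begin{equation*}
    \frac{\varepsilon_{k+1}}{\varepsilon_k}
    = \frac{\varepsilon_{k+1}}{\norm{u^{k+1}-\uopt}{2}}\cdot\frac{\norm{u^{k+1}-\uopt}{2}}{\norm{u^k-\uopt}{2}}\cdot\frac{\norm{u^k-\uopt}{2}}{\varepsilon_k},
\end{equation*}
whose three factors converge to $c\kappa^2/\tau$, $\kappa$ and $\tau/(c\kappa^2)$, respectively, yielding the product $\kappa$. There is essentially no hard step: the main work has already been done in \Cref{thm_convsingularcase} part~2), which supplies the sharp quadratic lower and upper control on $\norm{F(u^k)}{2}$, and the positivity of the limit \eqref{eq_limexpos} together with $\kappa>0$ ensures that none of the intermediate ratios degenerate so that the cancellation is legitimate. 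The only subtlety worth flagging is that the conclusion genuinely requires $\kappa>0$ (which is assumed) and the \emph{existence and positivity} of the limit in \eqref{eq_limexpos} rather than just upper and lower bounds, since otherwise one would obtain only $\limsup$/$\liminf$ estimates rather than a true limit for $\varepsilon_{k+1}/\varepsilon_k$.
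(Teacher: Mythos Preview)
Your proof is correct and follows essentially the same approach as the paper: both invoke \Cref{thm_convsingularcase}~2) to obtain the sharp asymptotic $\norm{F(u^k)}{2}\sim c\norm{u^k-\uopt}{2}^2$ with $c=\norm{F''(\uopt)(\phi,\phi)}{2}>0$, and then combine this with the assumed limits $\norm{u^{k+1}-\uopt}{2}/\norm{u^k-\uopt}{2}\to\kappa$ and $\norm{s^k}{2}/\norm{u^k-\uopt}{2}\to\tau>0$. The only difference is in the bookkeeping of the final step: the paper writes $\varepsilon_{k+1}/\varepsilon_k=\bigl(\norm{F(u^{k+2})}{2}/\norm{F(u^{k+1})}{2}\bigr)\cdot\bigl(\norm{s^k}{2}/\norm{s^{k+1}}{2}\bigr)\to\kappa^2\cdot\kappa^{-1}$, whereas you first establish $\varepsilon_k/\norm{u^k-\uopt}{2}\to c\kappa^2/\tau$ and then telescope; this is a purely cosmetic rearrangement of the same limits.
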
	

\begin{proof}
	From part~2) of \Cref{thm_convsingularcase} and \eqref{eq_assthm32} it follows that
	\begin{equation*}
		\lim_{k\to\infty}\frac{\norm{F(u^{k+2})}{2}}{\norm{F(u^{k+1})}{2}} = 
		\lim_{k\to\infty}\frac{\norm{u^{k+2}-\uopt}{2}^2}{\norm{u^{k+1}-\uopt}{2}^2} = \kappa^2.
	\end{equation*}
	By assumption the limit $\lim_{k\to\infty}\frac{\norm{s^k}{2}}{\norm{u^k-\uopt}{2}}$ exists and is positive, hence
	\begin{equation*}
		\lim_{k\to\infty}\frac{\norm{s^k}{2}}{\norm{s^{k+1}}{2}} = 
		\lim_{k\to\infty}\frac{\kappa\norm{u^k-\uopt}{2}}{\kappa\norm{u^{k+1}-\uopt}{2}} = \frac{1}{\kappa}.
	\end{equation*}
	The claim thus follows from
	\begin{equation*}
		\frac{\varepsilon_{k+1}}{\varepsilon_k} = 
		\frac{\norm{F(u^{k+2})}{2}}{\norm{F(u^{k+1})}{2}} \cdot \frac{\norm{s^k}{2}}{\norm{s^{k+1}}{2}}, 
	\end{equation*}
	which holds by definition.
\end{proof}	

\subsubsection{The main result}

As one of the main results of this work we show that the Broyden updates converge q-linearly to zero if $(u^k)$ is generated according to \Cref{thm_fundlemsingularJacobian} or \Cref{thm_starlikedomainofconvforBroyden}.

\begin{theorem}\label{thm_qlinearrateforupdates}
	Let \Cref{A2} hold and let $(u^k)$ be generated by Algorithm~\ref{alg_broy}. Suppose that
	\begin{equation}\label{eq_convassqlinconvofupd}
		\lim_{k\to\infty}\frac{\norm{u^{k+1}-\uopt}{2}}{\norm{u^k-\uopt}{2}} = \frac{\sqrt5 - 1}{2} 
		\qquad\text{ and }\qquad
		\lim_{k\to\infty}\,\frac{\norm{P_X(u^k-\uopt)}{2}}{\norm{P_N(u^k-\uopt)}{2}^2} = 0
	\end{equation}
	as well as \eqref{eq_regbehavioronN} are satisfied.
	Then 
	\begin{equation}\label{eq_qlinconvofBrupd}
		\lim_{k\to\infty}\frac{\varepsilon_{k+1}}{\varepsilon_k} = \frac{\sqrt5 - 1}{2}.
	\end{equation}
	In particular, \eqref{eq_qlinconvofBrupd} holds if $(u^k)$ is generated according to \Cref{thm_fundlemsingularJacobian} or \Cref{thm_starlikedomainofconvforBroyden}.
\end{theorem}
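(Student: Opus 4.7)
The plan is to reduce the statement to \Cref{lem_qlinearrateofupdates} applied with $\kappa=(\sqrt5-1)/2\in(0,1)$. The two conditions in \eqref{eq_assthm32} of that lemma coincide with the hypothesis \eqref{eq_convassqlinconvofupd}, so the entire work lies in verifying the remaining hypothesis \eqref{eq_limexpos}, namely that the limit $\lim_{k\to\infty}\norm{s^k}{2}/\norm{u^k-\uopt}{2}$ exists and is strictly positive. The key tool for this is the formula \eqref{eq_regbehavioronN}, which describes the regular behaviour of $(u^k)$ along the nullspace $N$.

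To pin down the limit, I would split $s^k = (u^{k+1}-\uopt) - (u^k-\uopt)$ along the decomposition $\R^n = X \oplus N$. The $N$-component is furnished directly by \eqref{eq_regbehavioronN},
\begin{equation*}
P_N(s^k) = (\lambda_k - 1)\,P_N(u^k-\uopt),
\end{equation*}
and since the second part of \eqref{eq_convassqlinconvofupd} yields $\norm{u^k-\uopt}{2} = (1+o(1))\,\norm{P_N(u^k-\uopt)}{2}$, the first part of \eqref{eq_convassqlinconvofupd} combined with \eqref{eq_regbehavioronN} forces $\lambda_k\to\kappa$ (the positive sign being inherited from the setting, as in \Cref{thm_fundlemsingularJacobian}). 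Hence $\norm{P_N(s^k)}{2} = (1-\kappa+o(1))\,\norm{P_N(u^k-\uopt)}{2}$. The $X$-component is controlled by the triangle inequality and again by the second part of \eqref{eq_convassqlinconvofupd},
\begin{equation*}
\norm{P_X(s^k)}{2} \leq \norm{P_X(u^{k+1}-\uopt)}{2} + \norm{P_X(u^k-\uopt)}{2} = o\bigl(\norm{P_N(u^k-\uopt)}{2}^2\bigr),
\end{equation*}
which is of strictly smaller order. Assembling the two estimates delivers
\begin{equation*}
\lim_{k\to\infty}\frac{\norm{s^k}{2}}{\norm{u^k-\uopt}{2}} \;=\; 1-\kappa \;=\; \frac{3-\sqrt5}{2} \;>\; 0,
\end{equation*}
verifying \eqref{eq_limexpos}, so that \Cref{lem_qlinearrateofupdates} yields \eqref{eq_qlinconvofBrupd}.

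The ``In particular'' clause then follows at once, because any sequence $(u^k)$ produced according to \Cref{thm_fundlemsingularJacobian} or \Cref{thm_starlikedomainofconvforBroyden} satisfies both \eqref{eq_conclestforsingJac2}---which is \eqref{eq_convassqlinconvofupd}---and \eqref{eq_regbehavioronN}. The only real subtlety in the argument is ensuring that $P_N(s^k)$ genuinely dominates $s^k$ rather than suffering cancellation; this boils down to $1-\lambda_k$ staying bounded away from zero, which is secured by $\lambda_k\to\kappa<1$. Everything else is a direct computation from \eqref{eq_convassqlinconvofupd} and \eqref{eq_regbehavioronN}.
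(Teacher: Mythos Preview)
Your proof is correct and follows essentially the same route as the paper: both reduce to \Cref{lem_qlinearrateofupdates} with $\kappa=(\sqrt5-1)/2$ and establish $\lim_{k\to\infty}\norm{s^k}{2}/\norm{u^k-\uopt}{2}=\tfrac{3-\sqrt5}{2}$ by splitting $s^k$ along $X\oplus N$, using \eqref{eq_regbehavioronN} for the $N$-part and the second limit in \eqref{eq_convassqlinconvofupd} to show the $X$-part is negligible. Two minor remarks: the paper obtains the lower bound more cheaply via the plain triangle inequality $\norm{s^k}{2}\geq\norm{u^k-\uopt}{2}-\norm{u^{k+1}-\uopt}{2}$ (only the upper bound uses the decomposition), and you need not derive $\lambda_k\to\kappa$, since this is already part of the hypothesis \eqref{eq_regbehavioronN} as formulated in \Cref{thm_fundlemsingularJacobian}.
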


\begin{proof}
	Regarding the main assertion we note that due to \Cref{lem_qlinearrateofupdates}, applied with $\kappa=(\sqrt5 -1)/2$, it suffices to show existence and positivity of the limit 
	$\hat\kappa:=\lim_{k\to\infty}\frac{\norm{s^k}{2}}{\norm{u^k-\uopt}{2}}$.
	We prove that $\hat\kappa=\frac{3-\sqrt5}{2}$.
	By taking the limit $k\to\infty$ in 
	\begin{equation*}
		\frac{\norm{s^k}{2}}{\norm{u^k-\uopt}{2}} \geq 
		\frac{\norm{u^k-\uopt}{2}-\norm{u^{k+1}-\uopt}{2}}{\norm{u^k-\uopt}{2}}
		\geq 1 - \frac{\norm{u^{k+1}-\uopt}{2}}{\norm{u^k-\uopt}{2}}
	\end{equation*}
	it is enough to demonstrate for all sufficiently large $k$ that
	\begin{equation}\label{eq_conclineq}
		\frac{\norm{s^k}{2}}{\norm{u^k-\uopt}{2}} \leq 1-\tilde\lambda_k,
	\end{equation}
	where $\lim_{k\to\infty}\tilde\lambda_k=\frac{\sqrt5 - 1}{2}$.
	As in previous proofs we derive for all large $k$ that
	\begin{equation}\label{eq_estimatelong}
		\begin{split}
			\norm{s^k}{2} & = \norm{P_X(s^k) + P_N(u^{k+1}-\uopt) - P_N(u^k-\uopt)}{2}\\
			& \leq \omega_{k+1} \norm{u^{k+1}-\uopt}{2}^2 + \omega_k\norm{u^k-\uopt}{2}^2 + (1-\lambda_k)\norm{P_N(u^k-\uopt)}{2}\\
			& \leq \left(\omega_{k+1} +\omega_k\right) \norm{u^k-\uopt}{2}^2+ (1-\lambda_k)\norm{u^k-\uopt}{2},
		\end{split}
	\end{equation}
	where $(\omega_k)$ is given in \eqref{def_omegak} and
	we used that due to \eqref{eq_regbehavioronN} there holds $P_N(u^{k+1}-\uopt)=\lambda_k P_N (u^k-\uopt)$ for all $k\geq 1$ with a sequence $(\lambda_k)\subset(\frac38,\frac45)$ that satisfies $\lim_{k\to\infty}\lambda_k=\frac{\sqrt5 -1}{2}$.
	Since $\omega_k\to 0$ for $k\to\infty$, \eqref{eq_estimatelong} implies \eqref{eq_conclineq}, which concludes the proof of the main assertion. 
	
	If $(u^k)$ is generated according to \Cref{thm_fundlemsingularJacobian} or \Cref{thm_starlikedomainofconvforBroyden}, then 
	$(u^k)$ is generated by Algorithm~\ref{alg_broy} and 
	satisfies \eqref{eq_conclestforsingJac2}, which is identical to \eqref{eq_convassqlinconvofupd}, as well as \eqref{eq_regbehavioronN}. 
	As we have already established, this implies \eqref{eq_qlinconvofBrupd}. 
\end{proof}


\section{Further convergence properties of Broyden's method}\label{sec_furtherconvprop}

In \Cref{thm_convsingularcase} we derived convergence properties
of Broyden's method that were helpful in establishing convergence of the Broyden matrices. In this section we draw further conclusions from \Cref{thm_convsingularcase}. 

\subsection{Violation of uniform linear independence}

In this section we prove that the normalized Broyden steps $(\hat s^k)$, defined at the beginning of \Cref{sec_main}, violate uniform linear independence. 
This is of interest because it shows that existing results on the convergence of the Broyden matrices cannot be applied in the setting of this work, see the related discussion in \Cref{sec_introconvBrmatrices} where uniform linear independence is defined.

\Cref{thm_convsingularcase}~1) shows that 
if $(u^k)$ is generated according to \Cref{thm_fundlemsingularJacobian} or \Cref{thm_starlikedomainofconvforBroyden}, then
$(\hat s^k)$ converges up to sign changes to $\phi$. Unsurprisingly, this prevents uniform linear independence. 

\begin{lemma}\label{lem_ULIviolated}
	Let $v\in\R^n$ for some $n\geq 2$ and let $(v^k)\subset\R^n$ be a sequence with $\norm{v^k}{2}=1$ for all $k\in\N_0$ and
	$\lim_{k\to\infty}\min\{\norm{v^k-v}{2},\norm{v^k+v}{2}\}=0$. Let for each $k\in\N_0$ an $n$-tuple $(k_1,\ldots,k_n)\in\N_0^n$ with $k\leq k_1<k_2<\ldots<k_n$ be given. Then the smallest singular value of 
	\begin{equation}
		\begin{pmatrix}
			v^{k_1} \, & \, \cdots \, & \, v^{k_n}
		\end{pmatrix}
	\end{equation}
	converges to zero for $k\to\infty$.\\
	In particular, if $(u^k)$ is generated according to \Cref{thm_fundlemsingularJacobian} or \Cref{thm_starlikedomainofconvforBroyden}, then $(\hat s^k)$ is not uniformly linearly independent.
\end{lemma}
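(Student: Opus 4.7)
The plan is to exploit the fact that under the hypothesis all but finitely many $v^{k_i}$ cluster at the two-point set $\{+v,-v\}$, so any two of them are almost linearly dependent (up to a sign), which forces a non-trivial near-null vector of the matrix.

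First I would choose, for each index $k_i = k_i(k)$ appearing in the $n$-tuple, a sign $\sigma_i = \sigma_i(k)\in\{+1,-1\}$ with
\[
\norm{v^{k_i}-\sigma_i v}{2} = \min\bigl\{\norm{v^{k_i}-v}{2},\,\norm{v^{k_i}+v}{2}\bigr\}.
\]
Because $k_1\geq k$, the hypothesis $\lim_{j\to\infty}\min\{\norm{v^j-v}{2},\norm{v^j+v}{2}\}=0$ yields $\norm{v^{k_1}-\sigma_1 v}{2}\to 0$ and $\norm{v^{k_2}-\sigma_2 v}{2}\to 0$ as $k\to\infty$. Next, using $\sigma_i^2=1$, I would write
\[
\sigma_1 v^{k_1}-\sigma_2 v^{k_2} \;=\; \sigma_1\bigl(v^{k_1}-\sigma_1 v\bigr)\;-\;\sigma_2\bigl(v^{k_2}-\sigma_2 v\bigr),
\]
so by the triangle inequality $\norm{\sigma_1 v^{k_1}-\sigma_2 v^{k_2}}{2}\to 0$ as $k\to\infty$.

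Then, because $n\geq 2$, I would introduce the unit vector $w_k\in\R^n$ whose first two coordinates are $\sigma_1/\sqrt2$ and $-\sigma_2/\sqrt2$ and whose remaining coordinates vanish. Denoting the matrix in the statement by $M_k$, one has $M_k w_k=(\sigma_1 v^{k_1}-\sigma_2 v^{k_2})/\sqrt2$, hence $\norm{M_k w_k}{2}\to 0$. Since the smallest singular value of $M_k$ is bounded above by $\norm{M_k w_k}{2}/\norm{w_k}{2}=\norm{M_k w_k}{2}$ (via its variational characterization $\sigma_{\min}(M_k)=\min_{\norm{w}{}=1}\norm{M_k w}{2}$), this proves the main claim.

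For the ``In particular'' assertion I would apply the main claim with $v=\phi$ and $v^k=\hat s^k$; the assumption is supplied by \Cref{thm_convsingularcase}~1) together with $\norm{\hat s^k}{2}=1$ and $\lim_{k\to\infty}\norm{u^k-\uopt}{2}=0$. By the definition of uniform linear independence recalled in \Cref{sec_introconvBrmatrices}, a ULI sequence would require a constant $\rho>0$ and $m\in\N$ such that for every sufficiently large $k$ there exist indices $k\leq k_1<\ldots<k_n\leq k+m$ for which \emph{all} singular values of the corresponding matrix exceed $\rho$. The main claim, applied to any such choice of indices (all of which satisfy $k_1\geq k$), shows that the smallest of these singular values tends to zero; no $\rho>0$ can therefore work, contradicting ULI.

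There is no real obstacle in this argument: everything reduces to the elementary observation that two neighbouring members of $(v^k)$ satisfy $\sigma_1 v^{k_1}\approx\sigma_2 v^{k_2}$, which is already a near-dependence relation witnessed by a unit vector with only two nonzero entries. The only minor subtlety is that the signs $\sigma_i$ depend on both $k$ and $i$, but since the relation $\sigma_i^2=1$ holds uniformly this plays no role.
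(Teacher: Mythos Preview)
Your argument is correct. It differs from the paper's proof in a pleasant way: the paper flips the signs of all columns so that $\tilde v^{k_i}\to v$ for every $i$, observes that multilinearity gives $\lvert\det(v^{k_1},\ldots,v^{k_n})\rvert=\lvert\det(\tilde v^{k_1},\ldots,\tilde v^{k_n})\rvert$, and then uses continuity of the determinant together with $n\geq 2$ to conclude $\lvert\det V_k\rvert\to 0$, hence the smallest singular value tends to zero. You instead work with only the first two columns and explicitly exhibit a unit near-null vector $w_k$, invoking the variational characterization $\sigma_{\min}(M_k)=\min_{\lVert w\rVert=1}\lVert M_k w\rVert$. Your route is slightly more elementary (it avoids the determinant and the implicit step that bounded column norms force the \emph{smallest} singular value to vanish when the product does), and it makes the mechanism of failure of ULI very transparent. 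The paper's determinant approach, on the other hand, immediately suggests the stronger remark made there that in fact $n-1$ singular values tend to zero, which your two-column construction does not directly yield.
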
	

\begin{proof}
	Let us prove the main result first. For each $k\in\N_0$ define $\tilde v^k\in\{v^k,-v^k\}$ such that 
	$\min\{\norm{\tilde v^k-v}{2},\norm{\tilde v^k+v}{2}\}=\norm{\tilde v^k-v}{2}$.
	By multilinearity, there holds
	\begin{equation*}\label{eq_matrVk}
		\left\lvert 
		\det\Bigl(\begin{pmatrix}
			\tilde v^{k_1} \, & \, \tilde v^{k_2} \, & \, \cdots \, & \, \tilde v^{k_n}
		\end{pmatrix}\Bigr) \right\rvert = \bigl\lvert\det(V_k)\bigr\lvert,
	\end{equation*}
	where $V_k$ is the matrix given by \eqref{eq_matrVk}. 
	Since $\lim_{k\to\infty}\tilde v^k = v$, continuity entails $\lim_{k\to\infty} \lvert \det(V_k)\rvert =0$, where we used $n\geq 2$.
	This shows that at least one singular value of $V_k$ converges to zero. (In fact, it is not difficult to argue that $n-1$ singular values converge to zero.)
	
	The additional part concerning $(\hat s^k)$ follows if we apply the main result to $(v^k)\equiv (\hat s^k)$ and $v=\phi$, taking \Cref{thm_convsingularcase}~1) into account. 
\end{proof}	

\subsection{The nullspace of the Broyden limit matrix}

If $(u^k)$ is generated according to \Cref{thm_fundlemsingularJacobian} or \Cref{thm_starlikedomainofconvforBroyden}, then the convergence property 1) of \Cref{thm_convsingularcase} is satisfied and, by \Cref{thm_rlinconvofBrupdates}, the limit $\lim_{k\to\infty} B_k$ exists. 
This implies that $\lim_{k\to\infty} B_k$ is singular with $N$ in its nullspace as we show next.

\begin{lemma}\label{lem_singularityoflimitofBroydenmatrices}	
	Let $F:\R^n\rightarrow\R^n$ be strictly differentiable at $\uopt$. Let $(u^k)$ be generated by Algorithm~\ref{alg_broy} and let $\lim_{k\to\infty} u^k = \uopt$. Suppose there is $\phi\in\ker(F'(\uopt))$ with $\norm{\phi}{2}=1$ and  $\lim_{k\to\infty}\min\{\norm{\hat s^k - \phi}{2},\norm{\hat s^k + \phi}{2}\}=0$. Suppose further that $(B_k)$ converges to some $B$. Then $\spann{\phi}\subset\ker(B)$. 
	In particular, if $(u^k)$ is generated according to \Cref{thm_fundlemsingularJacobian} or \Cref{thm_starlikedomainofconvforBroyden}, then there holds 
	$N\subset \ker(\lim_{k\to\infty} B_k)$.
\end{lemma}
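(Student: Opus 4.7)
The plan is to combine two ingredients: (i) the \emph{secant equation} $B_{k+1} s^k = y^k$, which is inherent to the Broyden update and follows by plugging $s^k$ into the update formula in Algorithm~\ref{alg_broy} (the terms $B_k s^k$ cancel); and (ii) strict differentiability of $F$ at $\uopt$, which, applied to the pair $u^{k+1}, u^k \to \uopt$ with $s^k = u^{k+1} - u^k \neq 0$ (the latter because Algorithm~\ref{alg_broy} did not terminate), yields $y^k = F'(\uopt) s^k + o(\norm{s^k}{2})$. Dividing both sides by $\norm{s^k}{2} > 0$, I would then record the chain
\[
B_{k+1}\hat s^k \;=\; \frac{y^k}{\norm{s^k}{2}} \;=\; F'(\uopt)\hat s^k + o(1).
\]

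The next step is to resolve the sign ambiguity in $\hat s^k \to \pm\phi$. Using the hypothesis $\min\{\norm{\hat s^k-\phi}{2},\norm{\hat s^k+\phi}{2}\} \to 0$, I would select $\sigma_k \in \{-1,+1\}$ with $\sigma_k \hat s^k \to \phi$ and multiply the identity above by $\sigma_k$. Since $B_{k+1} \to B$ and $\sigma_k \hat s^k \to \phi$, a standard continuity argument yields $B_{k+1}(\sigma_k \hat s^k) \to B\phi$ on the left. On the right, $\sigma_k y^k/\norm{s^k}{2} = F'(\uopt)(\sigma_k \hat s^k) + o(1) \to F'(\uopt)\phi = 0$, where the final equality holds because $\phi \in \ker(F'(\uopt))$. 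Matching the limits forces $B\phi = 0$, hence $\spann{\phi} \subset \ker(B)$. There is really no serious obstacle here: the argument is essentially one observation (that the secant equation is precisely the identity that couples $B_{k+1}$ to the direction $\hat s^k$) combined with strict differentiability and a sign bookkeeping step.

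For the ``in particular'' assertion, suppose $(u^k)$ is generated according to \Cref{thm_fundlemsingularJacobian} or \Cref{thm_starlikedomainofconvforBroyden}. Then \Cref{thm_rlinconvofBrupdates} gives $\sum_k \varepsilon_k < \infty$, and since $\varepsilon_k = \norm{B_{k+1}-B_k}{2}$, the sequence $(B_k)$ is Cauchy in $\R^{n\times n}$, hence convergent to some $B$. Furthermore, \eqref{eq_conclestforsingJac2} ensures $u^k \to \uopt$, and part~1) of \Cref{thm_convsingularcase} supplies the matching condition $\min\{\norm{\hat s^k-\phi}{2},\norm{\hat s^k+\phi}{2}\} \to 0$, with $\phi$ spanning $N = \ker(F'(\uopt))$ by \Cref{A2}. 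Applying the main assertion to this $\phi$ then yields $N = \spann{\phi} \subset \ker(\lim_{k\to\infty} B_k)$, completing the proof.
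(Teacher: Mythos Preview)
Your proof is correct and follows essentially the same strategy as the paper's: use the Broyden update to relate $B_{k+1}$ (resp.\ $B_k$) to $\hat s^k$, invoke strict differentiability to control $y^k/\norm{s^k}{2}$, and pass to the limit using $B_k\to B$ and $\sigma_k\hat s^k\to\phi$. Your route via the secant equation $B_{k+1}s^k=y^k$ is in fact slightly more direct than the paper's, which instead writes $(B_{k+1}-B_k)s^k=y^k-B_ks^k$, decomposes the right-hand side as $R_{\uopt}^k\norm{s^k}{2}-E_ks^k$ with $E_k=B_k-F'(\uopt)$, and concludes $E_k\hat s^k\to 0$ from $\norm{B_{k+1}-B_k}{2}\to 0$ and $R_{\uopt}^k\to 0$; only then does it use $E_k\to B-F'(\uopt)$. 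Both arguments hinge on exactly the same two ingredients, so the difference is purely organizational. One minor omission in your ``in particular'' part: you should note, as the paper does, that the hypotheses of \Cref{thm_fundlemsingularJacobian}/\Cref{thm_starlikedomainofconvforBroyden} (via \Cref{A2}) ensure that $F$ is strictly differentiable at $\uopt$, since this is a hypothesis of the main assertion you are invoking.
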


\begin{proof}
	We start the proof of the main claim by showing that $\lim_{k\to\infty}\norm{E_k\hat s^k}{2}= 0$.
	We have for all $k\in\N_0$ 
	\begin{equation*}
		\bigl[B_{k+1}-B_k\bigr] s^k
		= \Biggl[\left(y^k-B_k s^k\right)\frac{(s^k)^T}{\norm{s^k}{2}^2}\Biggr]s^k 
		= y^k-B_k s^k
		= R_{\uopt}^k \cdot \norm{s^k}{} - E_k s^k,
	\end{equation*}
	where
	\begin{equation*}
		R_{\uopt}^k:=\frac{F(u^{k+1})-F(u^k)-F'(\uopt)(u^{k+1}-u^k)}{\norm{u^{k+1}-u^k}{2}}.
	\end{equation*}
	Hence, 
	\begin{equation*}
		\norm{E_k\hat s^k}{2} 
		\leq \norm{B_{k+1}-B_k}{\ast} + \norm{R_{\uopt}^k}{2}.
	\end{equation*}
	From $\lim_{k\to\infty}\norm{B_{k+1}-B_k}{\ast} = 0 =\lim_{k\to\infty}\norm{R_{\uopt}^k}{2}$ we infer $\lim_{k\to\infty}\norm{E_k\hat s^k}{2}= 0$.
	The convergence of $(B_k)$ now implies 
	$\lim_{k\to\infty}\norm{(B-F'(\uopt)) \hat s^k}{2} = 0$, thus
	$(B-F'(\uopt))\phi = 0$, so $B\phi=0$, proving the main claim. 
	
	Regarding the additional claim (``In particular, ...'') we note that under the assumptions of either \Cref{thm_fundlemsingularJacobian} or  \Cref{thm_starlikedomainofconvforBroyden},
	$F$ is strictly differentiable at $\uopt$ due to \Cref{A2}.
	Moreover, there holds $\lim_{k\to\infty}\min\{\norm{\hat s^k - \phi}{2},\norm{\hat s^k + \phi}{2}\}=0$ by \Cref{thm_convsingularcase}~1), and 
	$(B_k)$ converges due to \Cref{thm_rlinconvofBrupdates}. 
	The assertion thus follows from the main claim.
\end{proof}

\begin{remark}
	In the numerical experiments $\ker(\lim_{k\to\infty} E_k)$ is consistently one-dimensional. This suggests that under the assumptions of \Cref{thm_convsingularcase}, $F'(\uopt)$ and $\lim_{k\to\infty} B_k$ usually coincide \emph{only} on $N$.
\end{remark}	

\subsection{A relationship between the Broyden updates and the Broyden steps}

\Cref{thm_convsingularcase}~3) shows that the norm of the Broyden updates satisfies $\varepsilon_k=O(\norm{s^k}{})$ for $k\to\infty$.
We now establish that under an additional assumption there also holds $\norm{s^k}{}=O(\varepsilon_k)$.
In particular, these relationships are satisfied if $(u^k)$ is generated 
according to \Cref{thm_fundlemsingularJacobian} or \Cref{thm_starlikedomainofconvforBroyden}.

\begin{lemma}\label{lem_convofdeltatotwo}
	Let $(u^k)$ be such that statements 1)--3) of \Cref{thm_convsingularcase} hold and suppose that there exists $\hat\kappa>0$ such that 
	\begin{equation*}
		\liminf_{k\to\infty} \, \frac{\norm{u^{k+1}-\uopt}{2}}{\norm{u^k-\uopt}{2}} \geq \hat\kappa.
	\end{equation*}
	Then $\lim_{k\to\infty}\delta_k=2$ for the sequence $(\delta_k)$ introduced in part 3) of \Cref{thm_convsingularcase}. 
	In particular, we have $\lim_{k\to\infty}\delta_k=2$ if $(u^k)$ is generated 
	according to \Cref{thm_fundlemsingularJacobian} or \Cref{thm_starlikedomainofconvforBroyden}.
\end{lemma}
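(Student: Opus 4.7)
The plan is to leverage the one-sided estimate $\liminf_{k\to\infty}\delta_k\geq 2$ already obtained in part~3) of \Cref{thm_convsingularcase}, so it remains to establish $\limsup_{k\to\infty}\delta_k\leq 2$. For that it is enough to match the upper bound $\norm{F(u^k)}{2}\leq C\norm{s^{k-1}}{2}^2$ from \Cref{thm_convsingularcase}~3) with an asymptotic lower bound of the form $\norm{F(u^k)}{2}\geq c\norm{s^{k-1}}{2}^2$ for some constant $c>0$; the conclusion will then follow by taking logarithms.

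To derive this lower bound I would proceed in two steps. First, by \Cref{thm_convsingularcase}~2) and the fact that $F''(\uopt)(\phi,\phi)\neq 0$ (which follows from the second bullet of \Cref{A2} via $P_N F''(\uopt)(\phi,\phi)\neq 0$), setting $M:=\norm{F''(\uopt)(\phi,\phi)}{2}>0$ we obtain $\norm{F(u^k)}{2}\geq \tfrac{M}{2}\norm{u^k-\uopt}{2}^2$ for all sufficiently large $k$. Second, for any fixed $\delta\in(0,\hat\kappa)$, the liminf assumption gives $\norm{u^{k-1}-\uopt}{2}\leq (\hat\kappa-\delta)^{-1}\norm{u^k-\uopt}{2}$ for all large $k$, so the triangle inequality applied to $s^{k-1}=u^k-u^{k-1}$ yields $\norm{s^{k-1}}{2}\leq \bigl(1+(\hat\kappa-\delta)^{-1}\bigr)\norm{u^k-\uopt}{2}$. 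Combining both estimates gives $\norm{F(u^k)}{2}\geq c\,\norm{s^{k-1}}{2}^2$ with $c>0$ independent of $k$.

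With both bounds $c\norm{s^{k-1}}{2}^2 \leq \norm{F(u^k)}{2}\leq C\norm{s^{k-1}}{2}^2$ in hand, I would take logarithms. Since $u^k\to\uopt$ (implicit in the framework, and also seen directly from the q-linear statement of \Cref{thm_convsingularcase}~2) combined with the liminf hypothesis), $s^{k-1}\to 0$ and hence $\log\norm{s^{k-1}}{2}\to -\infty$, becoming negative for all large $k$. Dividing the sandwich by this negative quantity flips inequalities and produces
\begin{equation*}
2 + \frac{\log C}{\log\norm{s^{k-1}}{2}} \leq \delta_k \leq 2 + \frac{\log c}{\log\norm{s^{k-1}}{2}},
\end{equation*}
and both outer expressions tend to $2$, yielding $\delta_k\to 2$. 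The ``In particular\ldots'' part is immediate: if $(u^k)$ is generated as in \Cref{thm_fundlemsingularJacobian} or \Cref{thm_starlikedomainofconvforBroyden}, then by \eqref{eq_conclestforsingJac2} the ratio $\norm{u^{k+1}-\uopt}{2}/\norm{u^k-\uopt}{2}$ converges to $(\sqrt{5}-1)/2>0$, so the liminf hypothesis holds with $\hat\kappa=(\sqrt{5}-1)/2$, while parts 1)--3) of \Cref{thm_convsingularcase} are explicitly guaranteed by its final assertion.

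There is no real obstacle here: the only subtleties are to ensure the strict positivity of $\norm{F''(\uopt)(\phi,\phi)}{2}$ (which must be extracted from \Cref{A2}, not just from parts 1)--3) of \Cref{thm_convsingularcase}) and to track the sign of $\log\norm{s^{k-1}}{2}$ when dividing the log-inequalities.
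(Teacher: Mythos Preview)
Your proof is correct and follows essentially the same approach as the paper: both combine the lower bound $\norm{F(u^k)}{2}\geq c'\norm{u^k-\uopt}{2}^2$ from part~2) with the estimate $\norm{s^{k-1}}{2}\leq C'\norm{u^k-\uopt}{2}$ (derived from the $\hat\kappa$-hypothesis via the triangle inequality) to obtain $\norm{F(u^k)}{2}\geq c\norm{s^{k-1}}{2}^2$, which together with part~3) yields $\limsup_{k\to\infty}\delta_k\leq 2$. Your version is slightly more explicit in writing out the logarithmic sandwich and in flagging that the strict positivity of $\norm{F''(\uopt)(\phi,\phi)}{2}$ is needed (and comes from \Cref{A2}), but the argument is the same.
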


\begin{proof}
	For the main claim we note that \Cref{thm_convsingularcase}~3) shows $\liminf_{k\to\infty}\delta_k\geq 2$. From the existence of $\hat\kappa$ we deduce that $\norm{u^{k+1}-\uopt}{}\geq \hat c\norm{s^k}{}$ for all sufficiently large $k$ and a constant $\hat c>0$. Therefore, \Cref{thm_convsingularcase}~2) 
	implies that there is $c>0$ such that for all large $k$ we have
	\begin{equation*}
		\norm{F(u^{k+1})}{2}\geq c\norm{u^{k+1}-\uopt}{2}^2
		\geq c\hat c^2\norm{s^k}{2}^2,
	\end{equation*}
	which yields $\limsup_{k\to\infty}\delta_k\leq 2$, thus  $\lim_{k\to\infty}\delta_k=2$.
	
	Regarding the additional claim (``In particular, ...'') we note that if $(u^k)$ is generated according to \Cref{thm_fundlemsingularJacobian} or \Cref{thm_starlikedomainofconvforBroyden}, then it satisfies 
	1)--3) of \Cref{thm_convsingularcase} and \eqref{eq_conclestforsingJac2}, in particular 
	\begin{equation*}
		\lim_{k\to\infty}\frac{\norm{u^{k+1}-\uopt}{2}}{\norm{u^k-\uopt}{2}} = \frac{\sqrt5 -1}{2}.
	\end{equation*}
	The assertion thus follows from the main claim.
\end{proof}

\subsection{A relationship between the r-factors of the errors and updates}

Using properties related to the statements 2) and 3) of \Cref{thm_convsingularcase}, we now establish a connection between
the r-factors of $(\norm{u^k-\uopt}{})$ and $(\varepsilon_k)$ that can be observed in the numerical experiments.

\begin{lemma}\label{lem_connectionrfactors}
	Let $F:\R^n\rightarrow\R^n$ and $\uopt\in\R^n$. Let $(u^k)$ be generated by Algorithm~\ref{alg_broy} and suppose that $\lim_{k\to\infty}u^k=\uopt$ and that there are constants $C,c,\hat c,\gamma,\hat\gamma,K>0$ such that
	for all $k\geq K$ there hold 
	\begin{equation*}
		\hat c\norm{u^{k+1}-\uopt}{2}^{1+\gamma}\leq\norm{F(u^{k+1})}{2}
		\qquad\text{ and }\qquad
		c\norm{s^k}{2}^{1+\hat\gamma}\leq\norm{F(u^{k+1})}{2}\leq C \norm{s^k}{2}^{1+\gamma}.
	\end{equation*}
	For $r^-:=\liminf_{k\to\infty}\sqrt[k]{\norm{u^{k}-\uopt}{2}}$ and   
	$r^+:=\limsup_{k\to\infty}\sqrt[k]{\norm{u^{k}-\uopt}{2}}$
	we have $\liminf_{k\to\infty}\sqrt[k]{\varepsilon_k}\geq(r^-)^{\hat\gamma}$ and
	$\limsup_{k\to\infty}\sqrt[k]{\varepsilon_k}\geq(r^+)^{\hat\gamma}$.
\end{lemma}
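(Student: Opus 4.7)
The plan is to derive a pointwise lower bound on $\varepsilon_k$ in terms of $\|u^{k+1}-\uopt\|_2$ and then take $k$-th roots, being careful with the index shift inside the liminf/limsup.

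First, I would combine the three hypotheses to eliminate $\|s^k\|_2$ and $\|F(u^{k+1})\|_2$ in favor of $\|u^{k+1}-\uopt\|_2$. Using the definition $\varepsilon_k = \|F(u^{k+1})\|_2/\|s^k\|_2$ and the lower bound $c\|s^k\|_2^{1+\hat\gamma}\leq\|F(u^{k+1})\|_2$ yields
\begin{equation*}
\varepsilon_k \;\geq\; c\,\|s^k\|_2^{\hat\gamma}
\qquad\text{for all }k\geq K.
\end{equation*}
Next, combining $\hat c\,\|u^{k+1}-\uopt\|_2^{1+\gamma}\leq \|F(u^{k+1})\|_2$ with the upper bound $\|F(u^{k+1})\|_2\leq C\,\|s^k\|_2^{1+\gamma}$ gives $\|s^k\|_2\geq (\hat c/C)^{1/(1+\gamma)}\|u^{k+1}-\uopt\|_2$. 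Plugging this into the previous inequality produces the pointwise estimate
\begin{equation*}
\varepsilon_k \;\geq\; c\,(\hat c/C)^{\hat\gamma/(1+\gamma)}\,\|u^{k+1}-\uopt\|_2^{\hat\gamma}
\qquad\text{for all }k\geq K.
\end{equation*}

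Second, I would take $k$-th roots. Writing $\alpha := c\,(\hat c/C)^{\hat\gamma/(1+\gamma)}>0$, we get
\begin{equation*}
\sqrt[k]{\varepsilon_k} \;\geq\; \sqrt[k]{\alpha}\cdot\bigl(\sqrt[k]{\|u^{k+1}-\uopt\|_2}\bigr)^{\hat\gamma},
\end{equation*}
and $\sqrt[k]{\alpha}\to 1$ as $k\to\infty$. So the two inequalities to be proven reduce to checking that the liminf and limsup of $\sqrt[k]{\|u^{k+1}-\uopt\|_2}$ equal $r^-$ and $r^+$, respectively, despite the index shift.

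The only mildly delicate point is this index shift. I would show it by writing $\|u^{k+1}-\uopt\|_2^{1/k} = \bigl(\|u^{k+1}-\uopt\|_2^{1/(k+1)}\bigr)^{(k+1)/k}$ and noting that $(k+1)/k\to 1$, while $u^k\to\uopt$ forces $\|u^{k+1}-\uopt\|_2^{1/(k+1)}\in(0,1)$ for large $k$ (when $r^+<1$; in the edge case $r^+\geq 1$ the claim is already trivial since $(r^+)^{\hat\gamma}$ is bounded by $1$ on the relevant side or requires only a straightforward limsup estimate). Along any subsequence where $\|u^{k_n+1}-\uopt\|_2^{1/(k_n+1)}\to r^\pm$, the continuity of $x\mapsto x^{(k_n+1)/k_n}$ yields $\|u^{k_n+1}-\uopt\|_2^{1/k_n}\to r^\pm$; the converse inclusion is identical. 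Hence $\liminf_k\|u^{k+1}-\uopt\|_2^{1/k}=r^-$ and $\limsup_k\|u^{k+1}-\uopt\|_2^{1/k}=r^+$.

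Combining these, I would conclude $\liminf_k\sqrt[k]{\varepsilon_k}\geq (r^-)^{\hat\gamma}$ by an $\epsilon$-argument (eventually $\sqrt[k]{\alpha}\geq 1-\epsilon$, then take $\epsilon\to 0$), and $\limsup_k\sqrt[k]{\varepsilon_k}\geq (r^+)^{\hat\gamma}$ by evaluating the pointwise bound along a subsequence realizing $r^+$. The main obstacle is really just bookkeeping around the index shift; the substantive content is already in the three hypothesized inequalities, whose combination makes $\|s^k\|_2$ comparable both to $\|F(u^{k+1})\|_2^{1/(1+\gamma)}$ and to $\|u^{k+1}-\uopt\|_2$ up to constants.
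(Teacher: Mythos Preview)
Your argument is correct and follows essentially the same route as the paper: derive $\varepsilon_k\geq c\|s^k\|^{\hat\gamma}$ from the second hypothesis and the definition of $\varepsilon_k$, derive $\|s^k\|\geq\bar C^{-1}\|u^{k+1}-\uopt\|$ from the first and third hypotheses, combine, and take $k$-th roots. The paper compresses all of this into three lines and does not spell out the index-shift step; your treatment of that shift (and of the edge case $r^+=1$) is more careful than necessary but not wrong.
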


\begin{proof}
	There is $\bar C>0$ such that 
	$\norm{u^{k+1}-\uopt}{}\leq \bar C\norm{s^k}{}$ for all $k\geq K$.
	Combining this with $\varepsilon_k\geq c\norm{s^k}{}^{\hat \gamma}$ for all $k\geq K$, we obtain 
	$\liminf_{k\to\infty}\sqrt[k]{\varepsilon_k}\geq(r^-)^{\hat\gamma}$.
	The claim for $\limsup_{k\to\infty}\sqrt[k]{\varepsilon_k}$ follows similarly.
\end{proof}


\section{Numerical experiments}\label{sec_num}

We use numerical examples to verify our findings for Algorithm~\ref{alg_broypNl}. 
\Cref{sec_designofexperiments} presents the design of the experiments,  \Cref{sec_numexamples} the results. The \textsc{Julia} code used for \emph{cumulative runs} (explained in \Cref{sec_cumser} below) is available at \href{https://arxiv.org/abs/2111.12393}{https://arxiv.org/abs/2111.12393}.  

\subsection{Design of the experiments}\label{sec_designofexperiments}

\subsubsection{Implementation in Arbitrary Precision Arithmetic}

The experiments are carried out in \textsc{Julia~1.5.3} 
using Arbitrary Precision Arithmetic via the built-in type \texttt{BigFloat}. 
We replace the termination criterion $F(u^k)=0$ in Algorithm~\ref{alg_broypNl} by $\norm{F(u^k)}{2}\leq 10^{-\tol}$ with some prescribed exponent $\tol\in\{100,1000\}$ that we specify in \Cref{sec_cumser} below along with the precision. 
By $\bar k\in\N_0$ we denote the final value of $k$ in Algorithm~\ref{alg_broypNl}.

\subsubsection{Known solution and initialization}\label{sec_ID}

All examples have $\uopt=0$ as solution. 
The starting point $\hat u$ in Algorithm~\ref{alg_broypNl} is randomly chosen in $[-\alpha,\alpha]^n$, with $\alpha\in \{10^{-j}\}_{j=1}^5$. 
For $\hat B$, respectively, $B_0$ we use $\hat B=F'(\hat u)+\beta\norm{F'(\hat u)}{2}\hat R$ and $B_0=F'(u^0)+ \beta\norm{F'(u^0)}{2}R$, where $\hat R,R\in\R^{n\times n}$ are matrices with random entries in $[-1,1]$ and $\beta\in \{10^{-j}\}_{j=1}^5\cup\{0\}$. 
Note that for $\beta=0$ the first two steps are Newton steps.
We will also consider a variant of Algorithm~\ref{alg_broypNl} in the numerical experiments where $B_0$ is taken to be the Broyden update of $\hat B$ (based on $\hat u$ and $u^0$), even though \Cref{thm_starlikedomainofconvforBroyden} does not cover the use of Broyden's method with $(\hat u,\hat B)$ as initial guess. 

\subsubsection{Single runs and cumulative runs}\label{sec_cumser}

For each example we conduct one \emph{single run} and three series of \emph{cumulative runs}. 
For the single run we use $\tol=1000$ and a precision of 15000 decimal places.
Each series of cumulative runs consists of five cumulative runs, and each
cumulative run consists of $m$ single runs with varying initial data as described in \Cref{sec_ID}, where $m=10^6$ in the first example and $m=125000$ in all other examples.
The first cumulative series has $(\alpha,\beta)=10^{-j}(1,0)$, the second $(\alpha,\beta)=10^{-j}(1,1)$, and the third 
$(\alpha,\beta)=10^{-j}(1,0)$ but with $B_0$ obtained by a Broyden update; in all series we use $j=1,2,3,4,5$. 
The $m$ single runs that constitute a cumulative run are conducted with $\tol=100$ in the first three examples and with $\tol=1000$ in the fourth example; the precision is slightly higher than 1500 decimal digits in the first three examples and 2100 decimal digits in the last example.

\subsubsection{Quantities for single runs}\label{sec_qoI}

When displaying the results of a single run, we will denote 
the sequence $(\hat u,u^0,u^1,\ldots)$ generated by Algorithm~\ref{alg_broypNl}
as $(u^k)_{k\geq 0}$, i.e., the initial guess becomes $u^0$ and the iterates are indicated by $(u^k)_{k\geq 1}$. Similarly, $(\hat B, B^0,\ldots)$ is indicated by $(B_k)_{k\geq 0}$. 
With this notation in mind, the quantities that we display for single runs are as follows:
\begin{equation*}
	F_k:=F(u^k), 
	\qquad r_k := \sqrt[k]{\norm{u^k-\uopt}{2}},
	\qquad q_k := \frac{\norm{u^k-\uopt}{2}}{\norm{u^{k-1}-\uopt}{2}},
	\qquad \delta_k := \frac{\log(\norm{F_k}{2})}{\log(\norm{s^{k-1}}{2})},
\end{equation*}
as well as
\begin{equation*}
	R_k:=\sqrt[k]{\varepsilon_{k-1}}, \qquad Q_k := \frac{\varepsilon_{k-1}}{\varepsilon_{k-2}}, \qquad
	\zeta_k := \min\bigl\{\norm{\hat s^k-\phi}{2},\,\norm{\hat s^k+\phi}{2}\bigr\},
\end{equation*}
where $(\zeta_k)$ is only defined if \Cref{A2} holds. 
Undefined quantities are set to $-1$, e.g., $\delta_0=-1$.
The index shifts for $\varepsilon_k$ that appear in the definitions of $R_k$ and $Q_k$ are motivated by the fact that the related quantities $r_k$ and $q_k$ rely on $u^k$, not $u^{k+1}$, and that we want the same to be true for $R_k$ and $Q_k$, respectively; recall that $\varepsilon_{k}=\norm{F_{k+1}}{}/\norm{s^{k}}{}$ is based on $u^{k+1}$. We indicate by $0\leq\Lambda_1^k\leq\ldots\leq\Lambda_n^k$ the singular values of $E_k=B_k-F'(\uopt)$ and include $\Lambda_1^k$ and $\Lambda_2^k$ as well as $\norm{E_k}{2}=\Lambda_n^k$ in the results. 

\subsubsection{Quantities for cumulative runs}

Next we provide the quantities that we display to assess cumulative runs. We will take only those single runs within a cumulative run into account that converge to $\uopt=0$, measured by $\norm{u^{\bar k}}{2} \leq 10^{-10}$, and whose final q-factors $q_{\bar k}$ and $Q_{\bar k}$ belong to $[0.616,0.620]$ in Example~1 and 2, since we expect $q_{\bar k}$ and $Q_{\bar k}$ to be close to $0.618$ in the setting of \Cref{thm_starlikedomainofconvforBroyden}, see also \Cref{thm_qlinearrateforupdates}.
For reasons that will become clear, in Example~3 we demand $q_{\bar k}\in[0.753,0.757]$ and $Q_{\bar k}\in [0.568,0.572]$ instead. 
In example~4 no restriction is imposed on $q_{\bar k}$ and $Q_{\bar k}$. 
Numbering the $m$ single runs within a cumulative run by $1,\ldots,m$ we collect in $\calA\subset\{1,\ldots,m\}$ the indices of single runs that are taken into account.
To gauge if $\lim_{k\to\infty} q_k \approx 0.618$ we  
let $k_0(j):=\min\{\bar k(j)-25,0.75\bar k(j)\}$,  
$K(j):=\{k_0(j),k_0(j)+1,\ldots,\bar k(j)\}$, and display
\begin{equation}\label{eq_defqminusqplus}
	q^-:=\min_{j\in\calA}\min_{k\in K(j)}q_k^j
	\qquad\text{ and }\qquad
	q^+:=\max_{j\in\calA}\max_{k\in K(j)}q_k^j
\end{equation}
for each cumulative run. Here and in the following, we use an additional index $j$ to denote the single runs within a cumulative run. 
Further quantities that we provide are 
\begin{equation*}
	\norm{F}{2}^{-}:=\min_{j\in\calA} \, \norm{F(u^{\bar k(j)})}{2} \qquad\text{ and }\qquad
	\norm{F}{2}^{+}:=\max_{j\in\calA} \, \norm{F(u^{\bar k(j)})}{2}, 
\end{equation*}
\begin{equation*}
	\norm{u}{2}^{-}:=\min_{j\in\calA} \, \norm{u^{\bar k(j)}}{2}
	\qquad\text{ and }\qquad 
	\norm{u}{2}^+:=\max_{j\in\calA} \, \norm{u^{\bar k(j)}}{2}
\end{equation*}
as well as
\begin{equation*}
	\delta^- := \min_{j\in\calA}\min_{k\in K(j)} \delta_k^j\qquad\text{ and }\qquad
	\delta^+ := \max_{j\in\calA}\max_{k\in K(j)} \delta_k^j,
\end{equation*}
and for the singular values 
\begin{equation*}
	\Lambda_1 := \max_{j\in\calA}\Lambda_1^{\bar k(j)}\qquad\text{ and }\qquad
	\norm{E}{2}:=\min_{j\in\calA}\min_{k\in K(j)} \norm{E_{k}}{2}
\end{equation*}
as well as
\begin{equation*}
	\Lambda_2^- := \min_{j\in\calA}\Lambda_2^{\bar k(j)} \qquad\text{ and }\qquad
	\Lambda_2^+ := \max_{j\in\calA}\Lambda_2^{\bar k(j)}.
\end{equation*}
The r- and q-factor of convergence of $(u^k)$ and $(\varepsilon_k)$ are assessed via
\begin{equation*}
	r^- := \min_{j\in\calA} \, \min_{k\in K(j)}r_k^j \qquad\text{ and }\qquad
	r^+ := \max_{j\in\calA} \, \max_{k\in K(j)}r_k^j,
\end{equation*}
with analogue definitions for $q^-$ and $q^+$, see \eqref{eq_defqminusqplus}, as well as 
\begin{equation*}
	R^-:=\min_{j\in\calA}\, \min_{k\in K(j)}R_k^j \qquad\text{ and }\qquad
	R^+:=\max_{j\in\calA} \, \max_{k\in K(j)}R_k^j, 
\end{equation*}
with analogue definitions for $Q^-$ and $Q^+$. 
Convergence of $(\hat s^k)$ to $\phi$ is measured by
\begin{equation*}
	\zeta^- := \min_{j\in\calA} \, \max_{k\in K(j)}\zeta_k^j\qquad\text{ and }\qquad
	\zeta^+ := \max_{j\in\calA} \, \max_{k\in K(j)}\zeta_k^j.
\end{equation*}
By $\text{it}^{-}$ and $\text{it}^{+}$ we denote the minimal, respectively, maximal number of iterations that are required by the single runs within a cumulative run. If termination has not occurred after 500 iterations, the respective single run is not taken into account. 

\subsection{Numerical examples}\label{sec_numexamples}

\subsubsection{Example~1: Singular Jacobian with \texorpdfstring{\Cref{A2}}{Assumption~\ref{A2}} satisfied}

We consider the example from \cite{DeckerKelley_BroydenSingularJacobian}. Let
\begin{equation*}
	F:\R^2\rightarrow\R^2,\qquad
	F(u):=\begin{pmatrix}
		u_1 + u_2^2 \\
		\frac32 u_1 u_2 + u_2^2 + u_2^3
	\end{pmatrix}.
\end{equation*}
We have $F(0)=0$, and for $F'(0)$ we find $X=\spann{(1,0)^T}$ and $N=\spann{\phi}$, where $\phi:=(0,1)^T$. Since $P_N(F''(0)(\phi,\phi))=2\phi\neq 0$, \Cref{A2} holds.
The results of a single run with $(\alpha,\beta)=(0.01,0)$ are displayed in Table~\ref{tab_ex1_one}. They confirm, for instance, \Cref{thm_qlinearrateforupdates} in that $\lim_{k\to\infty}q_k=\lim_{k\to\infty} Q_k=\frac{\sqrt{5}-1}{2}$,
as well as \Cref{lem_convofdeltatotwo} in that $\lim_{k\to\infty}\delta_k=2$
and \Cref{thm_convsingularcase}~1) in that
$\lim_{k\to\infty}\zeta_k
=o(\norm{u^k-\uopt}{2})$.
Let us also mention that the convergence of 
all singular values $(\Lambda_j^k)$, $j\in\{1,\ldots,n\}$,
can be established by means of \cite[Theorem~3.2]{Griewank_87}, so the convergence of $(\norm{E_k}{})$ is expected. 
The same result implies that $(\Lambda_1^k)$ converges to zero. 
However, it is interesting to note that in all experiments, $\Lambda_1^k$ is the \emph{only} singular value that converges to zero.
Since the assumptions of \Cref{lem_singularityoflimitofBroydenmatrices} hold, this implies that $\lim_{k\to\infty} B_k$ agrees with $F'(\uopt)$ only on $N$. 

Table~\ref{tab_ex1_two} displays results for the three cumulative series described in \Cref{sec_cumser}. 
Though not depicted, all cumulative runs produced $\norm{F}{2}^-\approx 4\times 10^{-101}$, 
$\norm{u}{2}^-\approx 5\times 10^{-51}$ and $\norm{u}{2}^+\approx 8\times 10^{-51}$.
The results of Table~\ref{tab_ex1_two} are in line with the expected starlike domain of convergence with density~1 and support the theoretical findings of \Cref{sec_main}, for instance those pointed out for Table~\ref{tab_ex1_one}. In addition, we notice a strong connection
between the r-factors $r^-,r^+$ and their counterparts $R^-,R^+$ in that
$r^-\approx R^-$ with $R^->r^-$ and $r^+\approx R^+$ with $R^+>r^+$.
\Cref{lem_connectionrfactors} (with $\gamma=\hat\gamma=1$) provides assumptions under which these relations hold even when $(u^k)$ is not q-linearly convergent, which seems to occur for some larger values of $\alpha$, see Table~\ref{tab_ex1_two}.
Let us also note that if $\liminf_{k\to\infty}\delta_k>1$ and $(u^k)$ converges r-linearly, then $(\varepsilon_k)$ converges r-linearly, as follows from a slight generalization of \Cref{thm_rlinconvofBrupdates}; this situation seems to occur for larger values of $\alpha$. 
The third series of cumulative runs in Table~\ref{tab_ex1_two} shows that when Broyden's method Algorithm~\ref{alg_broy} is applied to $(\hat u,F'(\hat u))$, 
the results are similar as in the other two series.

\begin{table}
	\caption{Example~1: Single run with $(\alpha,\beta)=(0.01,0)$}
	\centering
	{\resizebox{0.9\textwidth}{!}{\input{tables/ex1s.tex}}}
	\label{tab_ex1_one}
\end{table}

\begin{table}
	\caption{Example~1: Cumulative runs with 	
		$(\alpha,\beta)=10^{-j}(1,0)$ (top five),
		$(\alpha,\beta)=10^{-j}(1,1)$ (middle five),
		and $(\alpha,\beta)=10^{-j}(1,0)$ with $B_0$ obtained by a Broyden update (bottom five)} 
	\centering
	{\resizebox{\textwidth}{!}{\begin{tabular}{lllllllllllllllllll}
$j$ 
& $\lvert|F\rvert|^+$ 
& $r^-$ & $r^+$ & $q^-$ & $q^+$ & $R^-$ & $R^+$ & $Q^-$ & $Q^+$ & $\delta^-$ & $\delta^+$ & $\zeta^-$ & $\zeta^+$ & $\Lambda_1$ 
& $\lvert|E|\rvert$ 
& $\text{it}^-$ & $\text{it}^+$ 
& rem. \\\toprule
1 
& 1e-100 
& 0.531 & 0.796 & 6e-04 & 3e+02 & 0.549 & 0.798 & 1e-05 & 9e+04 & 1.857 & 2.108 & 7e-104 & 2e-43 & 2e-54 
& 1e-07 
& 207 & 479 
& 11698 \\
2 
& 1e-100 
& 0.562 & 0.740 & 0.02 & 14 & 0.565 & 0.742 & 0.002 & 196 & 1.891 & 2.046 & 4e-103 & 3e-44 & 7e-53 
& 1e-08 
& 204 & 367 
& 1135 \\
3 
& 1e-100 
& 0.547 & 0.739 & 0.056 & 19 & 0.550 & 0.741 & 0.003 & 359 & 1.946 & 2.051 & 2e-101 & 3e-45 & 6e-51 
& 4e-10 
& 196 & 368 
& 111 \\
4 
& 1e-100 
& 0.533 & 0.639 & 0.6179 & 0.6182 & 0.536 & 0.641 & 0.6175 & 0.6186 & 1.987 & 1.989 & 2e-100 & 3e-48 & 3e-51 
& 2e-11 
& 189 & 255 
& 10 \\
5 
& 1e-100 
& 0.529 & 0.615 & 0.6180 & 0.6180 & 0.532 & 0.617 & 0.6180 & 0.6180 & 1.987 & 1.989 & 2e-100 & 3e-51 & 1e-50 
& 6e-12 
& 187 & 237 
& 0 \\\midrule
1 
& 1e-100 
& 0.534 & 0.803 & 4e-04 & 550 & 0.533 & 0.804 & 2e-05 & 6e+04 & 1.866 & 2.103 & 2e-103 & 1e-42 & 5e-56 
& 4e-05 
& 194 & 496 
& 23806 \\
2 
& 1e-100 
& 0.565 & 0.775 & 0.002 & 77 & 0.568 & 0.776 & 2e-04 & 5e+03 & 1.897 & 2.078 & 4e-103 & 8e-44 & 5e-57 
& 4e-06 
& 205 & 435 
& 2045 \\
3 
& 1e-100 
& 0.555 & 0.656 & 0.24 & 1 & 0.558 & 0.658 & 0.004 & 63 & 1.905 & 2.002 & 7e-102 & 2e-44 & 1e-56 
& 1e-07 
& 200 & 270 
& 190 \\
4 
& 1e-100 
& 0.546 & 0.637 & 0.6166 & 0.6195 & 0.549 & 0.640 & 0.6166 & 0.6194 & 1.987 & 1.989 & 2e-101 & 4e-47 & 2e-56 
& 8e-08 
& 195 & 254 
& 17 \\
5 
& 1e-100 
& 0.530 & 0.615 & 0.6180 & 0.6180 & 0.533 & 0.617 & 0.6180 & 0.6180 & 1.987 & 1.989 & 2e-100 & 7e-53 & 9e-57 
& 8e-09 
& 187 & 237 
& 4 \\\midrule
1 
& 1e-100 
& 0.569 & 0.801 & 0.001 & 651 & 0.566 & 0.803 & 2e-06 & 5e+05 & 1.863 & 2.129 & 1e-103 & 2e-42 & 5e-55 
& 6e-07 
& 208 & 490 
& 19645 \\
2 
& 1e-100 
& 0.558 & 0.737 & 0.2812 & 5 & 0.560 & 0.739 & 0.014 & 26 & 1.926 & 2.025 & 6e-101 & 1e-45 & 2e-54 
& 6e-09 
& 201 & 364 
& 1148 \\
3 
& 1e-100 
& 0.542 & 0.635 & 0.6162 & 0.6199 & 0.545 & 0.638 & 0.6142 & 0.6222 & 1.987 & 1.989 & 2e-100 & 8e-47 & 5e-54 
& 1e-10 
& 193 & 252 
& 62 \\
4 
& 1e-100 
& 0.543 & 0.626 & 0.6180 & 0.6180 & 0.546 & 0.629 & 0.6180 & 0.6180 & 1.987 & 1.989 & 2e-100 & 3e-69 & 1e-54 
& 1e-10 
& 194 & 245 
& 1 \\
5 
& 1e-100 
& 0.534 & 0.622 & 0.6180 & 0.6180 & 0.536 & 0.624 & 0.6180 & 0.6180 & 1.987 & 1.989 & 2e-100 & 3e-84 & 9e-54 
& 1e-11 
& 189 & 242 
& 0 \\
\end{tabular}}}
	\label{tab_ex1_two}
\end{table}

Figure~\ref{fig_ex1_one} depicts the region of q-linear convergence of the iterates described in \Cref{thm_starlikedomainofconvforBroyden}. As expected, the region appears to be starlike and to have density one.

\begin{center}
	\begin{figure}
		\caption{Example~1: Domain $\calW_{\uopt}$ (blue) described in \Cref{thm_starlikedomainofconvforBroyden}, computed with $(\alpha,\beta)=10^{-j}(1,0)$, $j=1/2/3$ (left/middle/right); convergence that is likely faster or slower than described in 
			\Cref{thm_starlikedomainofconvforBroyden} is depicted in purple; starting points that do not converge within 300 iterations, including due to singularity, are shown in yellow; \Cref{thm_starlikedomainofconvforBroyden} asserts that the blue area includes a starlike domain with density one at $\uopt=0$}
		\smallskip
		\includegraphics[width=0.31\textwidth]{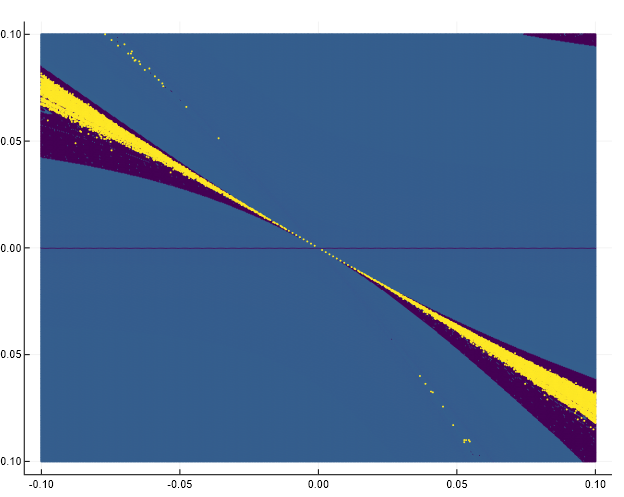}\hfill
		\includegraphics[width=0.31\textwidth]{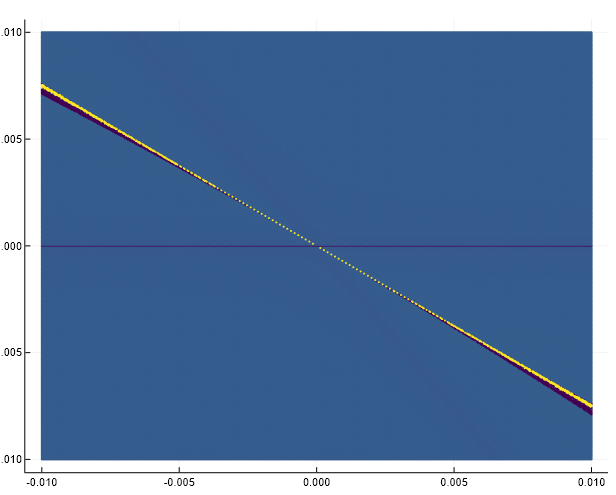}\hfill
		\includegraphics[width=0.31\textwidth]{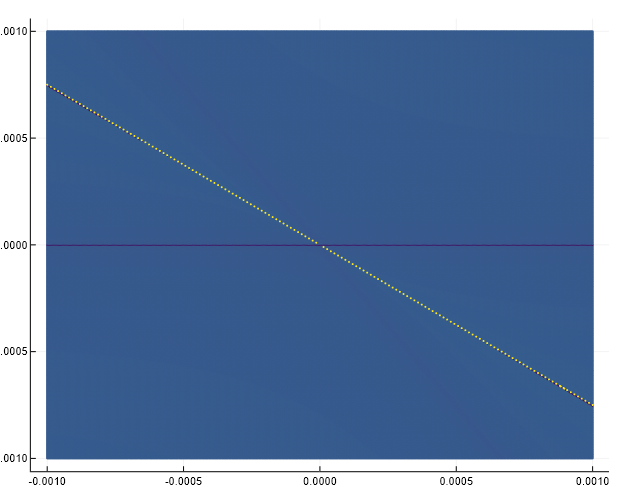}
		\label{fig_ex1_one}
	\end{figure}
\end{center}

\subsubsection{Example~2: Singular Jacobian with \texorpdfstring{\Cref{A2}}{Assumption~\ref{A2}} satisfied}

Let 
\begin{equation*}
	F:\R^3\rightarrow\R^3,\qquad
	F(u):=\begin{pmatrix}
		u_1^2 + u_2 + u_3 \\
		u_2-2 u_3^3 \\
		5 u_3 + u_3^2
	\end{pmatrix}.
\end{equation*}
Since we have $F(0)=0$, $X=\spann{\{(1,1,0)^T,(1,0,5)^T\}}$, $\ker(F'(0))=\spann{\phi}$, where $\phi:=(1,0,0)^T$, and $P_N(F''(0)(\phi,\phi))=2\phi\neq 0$, \Cref{A2} is satisfied at $\uopt=0$. 

The results of a single run with $(\alpha,\beta)=(0.1,0.1)$ are displayed in Table~\ref{tab_ex2_one}. We note that neither $(q_k)$ nor $(Q_k)$ are convergent, but that $(r_k)$ and $(R_k)$ are. 
The values of $(r_k)$ together with the apparent $\lim_{k\to\infty} \delta_k=2$ imply r-linear convergence of $(\varepsilon_k)$ by \Cref{thm_rlinconvofBrupdates}; Table~\ref{tab_ex2_one} confirms this. In contrast, \Cref{thm_qlinearrateforupdates} does not apply and it is apparent that $(\varepsilon_k)$ is indeed not q-linearly convergent since $Q_k>1$ for large values of $k$.
Since we have observed this behavior many times in the numerical experiments and since an inspection of $(q_k)$ (including the values not depicted) reveals that $q_k\in[0.53,0.72]$ for all large $k$, we suspect that it may be possible to prove asymptotic r-linear or non-asymptotic q-linear convergence of $(u^k)$ 
under assumptions that are weaker than those of \Cref{thm_fundlemsingularJacobian}.
The relationship $\lim_{k\to\infty}\zeta_k=o(\norm{u^k-\uopt}{2})$ from 
\Cref{thm_convsingularcase}~1) does not hold; 
since $q_k\leq 0.72$ for all large $k$, we infer that the second relation in \eqref{eq_conclestforsingJac} must be violated.

Table~\ref{tab_ex2_two} shows the three series of cumulative runs described in \Cref{sec_cumser}. 
In addition, we consistently find $\norm{F}{2}^-\approx 4\times 10^{-101}$, $\norm{u}{2}^-\approx 6.2\times 10^{-51}$ and $\norm{u}{2}^+\approx 1\times 10^{-50}$.
The results of Table~\ref{tab_ex2_two} are in agreement with the theoretical results of this work. 
For instance, in all three series the values of $\zeta^+$ imply that when $\alpha$ is sufficiently small, we have $\lim_{k\to\infty}\zeta_k=o(\norm{u^k-\uopt}{2})$. 

\begin{table}
	\caption{Example~2: Single run with $(\alpha,\beta)=(0.1,0.1)$}
	\centering
	{\resizebox{0.9\textwidth}{!}{\input{tables/ex2s.tex}}}
	\label{tab_ex2_one}
\end{table}

\begin{table}
	\caption{Example~2: The same three series of cumulative runs as in Example~1}
	\centering
	{\resizebox{\textwidth}{!}{\begin{tabular}{lllllllllllllllll}
$j$ 
& $\lvert|F\rvert|^+$ 
& $q^-$ & $q^+$ 
& $Q^-$ & $Q^+$ & $\delta^-$ & $\delta^+$ & $\zeta^-$ & $\zeta^+$ & $\Lambda_1$ & $\Lambda_2^-$ & $\Lambda_2^+$ & $\lvert|E|\rvert$ 
& $\text{it}^-$ & $\text{it}^+$ 
& rem. \\\toprule 
1 
& 1e-100 
& 0.6180 & 0.6180 
& 0.6180 & 0.6180 & 1.991 & 1.992 & 2e-1579 & 1e-286 & 3e-86 & 4e-19 & 3e-03 & 1e-13 
& 208 & 255 
& 0 \\
2 
& 1e-100 
& 0.6180 & 0.6180 
& 0.6180 & 0.6180 & 1.991 & 1.992 & 1e-1809 & 9e-474 & 2e-91 & 1e-22 & 4e-06 & 4e-16 
& 204 & 246 
& 0 \\
3 
& 1e-100 
& 0.6180 & 0.6180 
& 0.6180 & 0.6180 & 1.991 & 1.992 & 5e-1864 & 1e-634 & 6e-92 & 1e-23 & 4e-09 & 1e-16 
& 201 & 245 
& 0 \\
4 
& 1e-100 
& 0.6180 & 0.6180 
& 0.6180 & 0.6180 & 1.991 & 1.992 & 3e-2380 & 1e-761 & 6e-105 & 2e-31 & 4e-12 & 7e-22 
& 194 & 237 
& 0 \\
5 
& 1e-100 
& 0.6180 & 0.6180 
& 0.6180 & 0.6180 & 1.991 & 1.992 & 5e-2236 & 1e-924 & 9e-101 & 6e-30 & 4e-15 & 6e-21 
& 188 & 230 
& 0 \\\midrule
1 
& 1e-100 
& 6e-03 & 89 
& 1e-04 & 6842 & 1.868 & 2.084 & 2e-324 & 2e-43 & 3e-57 & 1e-05 & 3 & 2e-02 
& 196 & 385 
& 415 \\
2 
& 1e-100 
& 0.6180 & 0.6180 
& 0.6180 & 0.6180 & 1.991 & 1.992 & 5e-413 & 4e-50 & 4e-59 & 1e-06 & 8e-01 & 5e-03 
& 212 & 252 
& 6 \\
3 
& 1e-100 
& 0.6180 & 0.6180 
& 0.6180 & 0.6180 & 1.991 & 1.992 & 1e-514 & 6e-113 & 2e-56 & 4e-07 & 8e-02 & 7e-04 
& 206 & 248 
& 0 \\
4 
& 1e-100 
& 0.6180 & 0.6180 
& 0.6180 & 0.6180 & 1.991 & 1.992 & 3e-594 & 2e-266 & 2e-55 & 2e-07 & 2e-03 & 5e-05 
& 199 & 244 
& 0 \\
5 
& 1e-100 
& 0.6180 & 0.6180 
& 0.6180 & 0.6180 & 1.991 & 1.992 & 5e-671 & 4e-351 & 1e-55 & 4e-08 & 5e-05 & 5e-06 
& 195 & 240 
& 0 \\\midrule
1 
& 1e-100 
& 0.6180 & 0.6180 
& 0.6180 & 0.6180 & 1.991 & 1.992 & 5e-842 & 7e-104 & 2e-66 & 5e-08 & 2e-01 & 3e-05 
& 213 & 263 
& 0 \\
2 
& 1e-100 
& 0.6180 & 0.6180 
& 0.6180 & 0.6180 & 1.991 & 1.992 & 7e-809 & 2e-218 & 1e-65 & 1e-07 & 1e-02 & 9e-06 
& 211 & 249 
& 0 \\
3 
& 1e-100 
& 0.6180 & 0.6180 
& 0.6180 & 0.6180 & 1.991 & 1.992 & 5e-1023 & 8e-304 & 5e-70 & 2e-09 & 9e-04 & 1e-07 
& 199 & 245 
& 0 \\
4 
& 1e-100 
& 0.6180 & 0.6180 
& 0.6180 & 0.6180 & 1.991 & 1.992 & 9e-1123 & 2e-385 & 2e-71 & 2e-10 & 9e-05 & 1e-07 
& 199 & 240 
& 0 \\
5 
& 1e-100 
& 0.6180 & 0.6180 
& 0.6180 & 0.6180 & 1.991 & 1.992 & 3e-1255 & 3e-445 & 9e-71 & 2e-12 & 9e-06 & 2e-09 
& 184 & 234 
& 0 \\
\end{tabular}}}
	\label{tab_ex2_two}
\end{table}

\subsubsection{Example~3: Singular Jacobian with \texorpdfstring{\Cref{A2}}{Assumption~\ref{A2}} violated}

We modify $F$ from Example~2 so that it violates \Cref{A2}. 
Let
\begin{equation*}
	F:\R^3\rightarrow\R^3,\qquad
	F(u):=\begin{pmatrix}
		u_1^3 + u_2 + u_3 \\
		u_2-2 u_3^3 \\
		5 u_3 + u_3^2
	\end{pmatrix}.
\end{equation*}
This modification does not affect $F'(0)$, but yields $P_N(F''(0)(\phi,\phi))=0$, so the last requirement of \Cref{A2} is violated. In the sense of \cite[Section~2]{Griewank_StarlikedomainsofconvNewtSingSiamRev},
$\uopt=0$ is a \emph{regular second order singularity}. Theoretical results for Broyden's method on this type of singularity are not available.

The results of a single run of Algorithm~\ref{alg_broypNl} with $(\alpha,\beta)=(0.1,0)$, but with $B_0$ is obtained by a Broyden update, are shown in Table~\ref{tab_ex3_one}. 
Table~\ref{tab_ex3_two} displays the three cumulative series described in \Cref{sec_cumser}, where we always find $\norm{F}{2}^-\approx 4\times 10^{-101}$, $\norm{u}{2}^-\approx 3.5\times 10^{-34}$ and $\norm{u}{2}^+\approx 4.6\times 10^{-34}$.
As differences to Example~2 we note that $\lim_{k\to\infty}\delta_k=3$, $\lim_{k\to\infty}q_k\approx 0.75488$ is the real root of $t^3+t^2-1$, and 
$\lim_{k\to\infty} Q_k\approx 0.5698$ is the square of this root.
These limits are to be expected; the first by Taylor expansion of $F$, the second 
by the convergence rate of Broyden's method for $n=1$ (note that 
the asymptotic rate $\lim_{k\to\infty}q_k=\frac{\sqrt{5}-1}{2}$ that holds under \Cref{A2} is also the same as for $n=1$), see \cite{Diez},
and the third after straightforward modifications in the proof of \Cref{lem_qlinearrateofupdates}, see also \cite[Lemma~5~2.]{Man_convBrmatressoneD}. 
From a high-level view, however, the behavior is quite similar to the previous examples and we expect that the results of this work can be generalized to the case that $P_N(F^{(M)}(\uopt)(\phi,\ldots,\phi))=0$ for all $1\leq M\leq m$ and some $m\geq 1$ with $P_N(F^{(m+1)}(\uopt)(\phi,\ldots,\phi))\neq 0$.

\begin{table}
	\caption{Example~3: Single run with $(\alpha,\beta)=(0.1,0)$
		and $B_0$ obtained by a Broyden update}
	\centering
	{\resizebox{0.9\textwidth}{!}{\input{tables/ex3s.tex}}}
	\label{tab_ex3_one}
\end{table}

\begin{table}
	\caption{Example~3: The same three series of cumulative runs as in Example 1 and 2}
	\centering
	{\resizebox{\textwidth}{!}{\begin{tabular}{lllllllllllllllll}
$j$ 
& $\lvert|F\rvert|^+$ 
& $q^-$ & $q^+$ 
& $Q^-$ & $Q^+$ & $\delta^-$ & $\delta^+$ & $\zeta^-$ & $\zeta^+$ & $\Lambda_1$ & $\Lambda_2^-$ & $\Lambda_2^+$ & $\lvert|E|\rvert$ 
& $\text{it}^-$ & $\text{it}^+$ 
& rem. \\\toprule 
1 
& 1e-100 
& 0.7549 & 0.7549 
& 0.5698 & 0.5698 & 2.952 & 2.957 & 4e-1766 & 1e-321 & 3e-104 & 2e-19 & 4e-03 & 2e-13 
& 228 & 355 
& 0 \\
2 
& 1e-100 
& 0.7549 & 0.7549 
& 0.5698 & 0.5698 & 2.952 & 2.957 & 1e-1858 & 4e-594 & 1e-104 & 1e-22 & 8e-06 & 1e-14 
& 223 & 333 
& 0 \\
3 
& 1e-100 
& 0.7549 & 0.7549 
& 0.5698 & 0.5698 & 2.952 & 2.957 & 2e-2080 & 3e-836 & 1e-111 & 6e-27 & 8e-09 & 6e-17 
& 215 & 342 
& 1 \\
4 
& 1e-100 
& 0.7549 & 0.7549 
& 0.5698 & 0.5698 & 2.952 & 2.957 & 2e-2506 & 2e-1076 & 4e-118 & 5e-34 & 8e-12 & 4e-21 
& 216 & 341 
& 0 \\
5 
& 1e-100 
& 0.7549 & 0.7549 
& 0.5698 & 0.5698 & 2.952 & 2.957 & 1e-2375 & 3e-1316 & 1e-126 & 2e-34 & 8e-15 & 1e-20 
& 207 & 350 
& 0 \\\midrule
1 
& 1e-100 
& 0.06 & 12 
& 1e-04 & 1643 & 2.682 & 3.107 & 7e-527 & 2e-58 & 4e-77 & 2e-06 & 880 & 2e-02 
& 232 & 491 
& 1216 \\
2 
& 1e-100 
& 0.75 & 0.76 
& 0.56 & 0.58 & 2.952 & 2.957 & 2e-508 & 2e-63 & 5e-76 & 8e-07 & 48 & 8e-04 
& 220 & 430 
& 49 \\
3 
& 1e-100 
& 0.7543 & 0.7554 
& 0.5676 & 0.5721 & 2.952 & 2.957 & 1e-646 & 9e-65 & 3e-75 & 5e-08 & 4e-01 & 2e-04 
& 215 & 286 
& 1 \\
4 
& 1e-100 
& 0.7549 & 0.7549 
& 0.5698 & 0.5698 & 2.952 & 2.957 & 2e-709 & 7e-297 & 9e-72 & 3e-10 & 5e-04 & 2e-05 
& 205 & 275 
& 1 \\
5 
& 1e-100 
& 0.7549 & 0.7549 
& 0.5698 & 0.5698 & 2.952 & 2.957 & 5e-822 & 9e-235 & 1e-73 & 2e-11 & 7e-03 & 2e-06 
& 193 & 285 
& 0 \\\midrule
1 
& 1e-100 
& 0.7549 & 0.7549 
& 0.5698 & 0.5698 & 2.952 & 2.957 & 2e-1248 & 2e-130 & 5e-99 & 3e-09 & 2e-01 & 2e-05 
& 234 & 271 
& 0 \\
2 
& 1e-100 
& 0.7549 & 0.7549 
& 0.5698 & 0.5698 & 2.952 & 2.957 & 7e-1566 & 3e-248 & 5e-103 & 7e-13 & 4e-03 & 7e-08 
& 222 & 263 
& 0 \\
3 
& 1e-100 
& 0.7549 & 0.7549 
& 0.5698 & 0.5698 & 2.952 & 2.957 & 7e-1695 & 5e-365 & 1e-103 & 4e-13 & 7e-05 & 7e-08 
& 220 & 257 
& 1 \\
4 
& 1e-100 
& 0.7549 & 0.7549 
& 0.5698 & 0.5698 & 2.952 & 2.957 & 9e-1903 & 1e-468 & 2e-109 & 3e-15 & 2e-06 & 1e-09 
& 194 & 247 
& 1 \\
5 
& 1e-100 
& 0.7549 & 0.7549 
& 0.5698 & 0.5698 & 2.952 & 2.957 & 1e-2109 & 3e-553 & 9e-115 & 3e-16 & 6e-08 & 6e-11 
& 194 & 239 
& 0 \\
\end{tabular}}}
	\label{tab_ex3_two}
\end{table}

\subsubsection{Example~4: Regular Jacobian}

For comparison we consider an example in which $F'(\uopt)$ is regular. Let 
\begin{equation*}
	F:\R^3\rightarrow\R^3,\qquad
	F(u):=\begin{pmatrix}
		(1+u_1)^2(1+u_2)+(1+u_2)^2+u_3-2 \\ e^{u_1} + (1+u_2)^3 + u_3^2 - 2 \\ e^{u_3^2} + (1+u_2)^2 - 2
	\end{pmatrix}.
\end{equation*}
The results of a single run are displayed in Table~\ref{tab_ex4_one}, while the usual series of cumulative runs is shown in Table~\ref{tab_ex4_two}. 
The results differ significantly from those obtained in the previous examples. 
For instance, the values of $q^-$ and $q^+$ depict q-superlinear convergence of $(u^k)$ to $\uopt$, which is further underlined by the iteration numbers $\text{it}^-$ and $\text{it}^+$ that are much lower in this example. 
The values of $Q^+$ indicate that $(\varepsilon_k)$ is not q-linearly convergent, in general. In Table~\ref{tab_ex4_one} we notice that $(\delta_k)$ does not converge. 
It is unknown whether the iterates of Broyden's method admit a q-order larger than one, but in this example this seems to be the case:
The bound $\delta^-\approx 1.13$ suggests a q-order of approximately $1.13$ for $(\norm{u^k-\uopt}{2})$, which implies an r-order of $1.13$ for $(\varepsilon_k)$ by \cite[Lemma~3]{Man_convBrmatressoneD}. 
Perhaps not coincidentally, 
it is known that $(\norm{u^k-\uopt}{2})$ has an r-order of at least 
$2^{1/(2n)}\approx 1.122$ (this follows from Gay's theorem \cite[Theorem~3.1]{ConvOfBroydensMethodOnLinearSystems_Gay}).
No single runs had to be removed from the cumulative runs except for $(\alpha,\beta)=(0.1,0.1)$, where 1883 runs were removed. 

\begin{table}
	\caption{Example~4: Single run with $(\alpha,\beta)=(0.1,0)$}
	\centering
	{\resizebox{0.9\textwidth}{!}{\begin{tabular}{llllllllllll}
$k$ & $\lvert|F_k\rvert|$ & $\lvert|u^k\rvert|$ & $r_k$ & $q_k$ & $\varepsilon_{k-1}$ & $R_k$ & $Q_k$ & $\delta_k$ & $\Lambda_1^k$ & $\Lambda_2^k$ & $\lvert|E_k\rvert|$ \\
\hline 
0 & 3.0e-01 & 7.5e-02 & 7.5e-02 & -1 & -1 & -1 & -1 & -1 & 1.8e-02 & 1.0e-01 & 3.4e-01 \\
1 & 1.2e-02 & 3.8e-03 & 6.1e-02 & 5.1e-02 & 3.7e-01 & 6.1e-01 & -1 & 1.734 & 2.0e-07 & 1.6e-03 & 1.4e-02 \\
2 & 2.3e-05 & 8.7e-06 & 2.1e-02 & 2.3e-03 & 6.0e-03 & 1.8e-01 & 3.9e-02 & 1.916 & 2.0e-07 & 1.2e-03 & 1.0e-02 \\
3 & 3.5e-08 & 1.5e-08 & 1.1e-02 & 1.8e-03 & 4.0e-03 & 2.5e-01 & 6.7e-01 & 1.474 & 3.6e-07 & 5.5e-04 & 9.3e-03 \\
4 & 1.1e-11 & 3.0e-12 & 5.0e-03 & 2.0e-04 & 7.2e-04 & 2.4e-01 & 1.8e-01 & 1.402 & 1.2e-09 & 5.5e-04 & 9.3e-03 \\
5 & 1.0e-14 & 2.8e-15 & 3.7e-03 & 9.2e-04 & 3.4e-03 & 3.9e-01 & 4.7 & 1.214 & 5.6e-14 & 5.5e-04 & 8.6e-03 \\
6 & 1.2e-19 & 3.7e-20 & 1.7e-03 & 1.3e-05 & 4.2e-05 & 2.4e-01 & 1.2e-02 & 1.300 & 5.4e-17 & 5.5e-04 & 8.6e-03 \\
7 & 4.4e-23 & 1.2e-23 & 1.4e-03 & 3.3e-04 & 1.2e-03 & 4.3e-01 & 28 & 1.150 & 1.9e-22 & 5.5e-04 & 8.6e-03 \\
8 & 1.4e-26 & 3.9e-27 & 1.2e-03 & 3.2e-04 & 1.2e-03 & 4.7e-01 & 9.8e-01 & 1.128 & 2.3e-25 & 5.5e-04 & 8.5e-03 \\
9 & 1.4e-34 & 3.8e-35 & 3.6e-04 & 9.8e-09 & 3.6e-08 & 1.8e-01 & 3.1e-05 & 1.282 & 7.3e-29 & 5.5e-04 & 8.5e-03 \\
10 & 9.7e-45 & 2.6e-45 & 8.9e-05 & 7.0e-11 & 2.6e-10 & 1.3e-01 & 7.2e-03 & 1.279 & 7.1e-37 & 5.5e-04 & 8.5e-03 \\
11 & 3.9e-55 & 1.1e-55 & 2.6e-05 & 4.0e-11 & 1.5e-10 & 1.5e-01 & 5.7e-01 & 1.221 & 5.0e-47 & 5.5e-04 & 8.5e-03 \\
12 & 1.9e-67 & 5.3e-68 & 6.7e-06 & 5.0e-13 & 1.8e-12 & 1.3e-01 & 1.2e-02 & 1.213 & 2.0e-57 & 5.5e-04 & 8.5e-03 \\
13 & 6.8e-79 & 1.9e-79 & 2.4e-06 & 3.5e-12 & 1.3e-11 & 1.7e-01 & 7.1 & 1.162 & 9.9e-70 & 5.5e-04 & 8.5e-03 \\
14 & 3.3e-90 & 8.9e-91 & 9.9e-07 & 4.8e-12 & 1.8e-11 & 1.9e-01 & 1.4 & 1.137 & 3.5e-81 & 5.5e-04 & 8.5e-03 \\
15 & 3.8e-110 & 1.0e-110 & 1.3e-07 & 1.2e-20 & 4.3e-20 & 6.2e-02 & 2.4e-09 & 1.215 & 1.7e-92 & 5.5e-04 & 8.5e-03 \\
16 & 1.8e-138 & 5.0e-139 & 7.3e-09 & 4.8e-29 & 1.8e-28 & 2.3e-02 & 4.1e-09 & 1.252 & 2.0e-112 & 5.5e-04 & 8.5e-03 \\
17 & 2.1e-169 & 5.6e-170 & 4.0e-10 & 1.1e-31 & 4.1e-31 & 2.1e-02 & 2.3e-03 & 1.220 & 9.4e-141 & 5.5e-04 & 8.5e-03 \\
18 & 7.7e-205 & 2.1e-205 & 1.7e-11 & 3.8e-36 & 1.4e-35 & 1.5e-02 & 3.4e-05 & 1.206 & 1.1e-171 & 5.5e-04 & 8.5e-03 \\
19 & 1.6e-238 & 4.4e-239 & 1.2e-12 & 2.1e-34 & 7.7e-34 & 2.2e-02 & 56 & 1.162 & 4.0e-207 & 5.5e-04 & 8.5e-03 \\
20 & 3.5e-272 & 9.5e-273 & 1.1e-13 & 2.2e-34 & 7.9e-34 & 2.7e-02 & 1.0 & 1.139 & 8.3e-241 & 5.5e-04 & 8.5e-03 \\
21 & 6.8e-325 & 1.8e-325 & 1.7e-15 & 1.9e-53 & 7.1e-53 & 4.3e-03 & 9.0e-20 & 1.192 & 1.8e-274 & 5.5e-04 & 8.5e-03 \\
22 & 1.8e-401 & 4.9e-402 & 3.6e-18 & 2.7e-77 & 9.8e-77 & 5.0e-04 & 1.4e-24 & 1.234 & 3.5e-327 & 5.5e-04 & 8.5e-03 \\
23 & 1.1e-491 & 3.0e-492 & 3.3e-21 & 6.0e-91 & 2.2e-90 & 1.8e-04 & 2.2e-14 & 1.223 & 9.3e-404 & 5.5e-04 & 8.5e-03 \\
24 & 1.9e-593 & 5.2e-594 & 1.9e-24 & 1.8e-102 & 6.4e-102 & 9.0e-05 & 2.9e-12 & 1.206 & 5.6e-494 & 5.5e-04 & 8.5e-03 \\
25 & 1.7e-694 & 4.7e-695 & 2.0e-27 & 9.0e-102 & 3.3e-101 & 1.4e-04 & 5.1 & 1.169 & 9.8e-596 & 5.5e-04 & 8.5e-03 \\
26 & 1.5e-795 & 4.1e-796 & 3.5e-30 & 8.8e-102 & 3.2e-101 & 1.9e-04 & 9.8e-01 & 1.145 & 8.8e-697 & 5.5e-04 & 8.5e-03 \\
27 & 2.2e-936 & 6.1e-937 & 3.7e-34 & 1.5e-141 & 5.5e-141 & 9.8e-06 & 1.7e-40 & 1.176 & 7.7e-798 & 5.5e-04 & 8.5e-03 \\
28 & 3.1e-1142 & 8.5e-1143 & 4.2e-40 & 1.4e-206 & 5.1e-206 & 8.3e-08 & 9.3e-66 & 1.219 & 1.2e-938 & 5.5e-04 & 8.5e-03 \\
\end{tabular}}}
	\label{tab_ex4_one}
\end{table}

\begin{table}
	\caption{Example~4: The same three series of cumulative runs as in Example 1--3}
	\centering
	{\resizebox{\textwidth}{!}{\begin{tabular}{llllllllllllllllll}
$j$ 
& $\lvert|F\rvert|^+$ 
& $q^-$ & $q^+$ & $R^-$ & $R^+$ & $Q^-$ & $Q^+$ & $\delta^-$ & $\delta^+$ 
& $\Lambda_1$ & $\Lambda_2^-$ & $\Lambda_2^+$ & $\lvert|E|\rvert$ 
& $\text{it}^-$ & $\text{it}^+$ 
\\\toprule
1 
& 1e-1000 
& 1e-240 & 2e-30 & 6e-10 & 0.05 & 4e-88 & 8e+14 & 1.14 & 1.28 
& 2e-1003 & 2e-11 & 5e-02 & 4e-09 
& 22 & 32 
\\
2 
& 1e-1000 
& 2e-232 & 8e-35 & 5e-09 & 0.02 & 8e-90 & 8e+08 & 1.13 & 1.27 
& 1e-1003 & 6e-12 & 5e-04 & 1e-07 
& 22 & 27 
\\
3 
& 1e-1000 
& 6e-238 & 7e-50 & 5e-11 & 0.003 & 2e-75 & 4e+07 & 1.13 & 1.27 
& 2e-1004 & 1e-14 & 5e-06 & 5e-10 
& 21 & 24 
\\
4 
& 1e-1000 
& 5e-248 & 4e-59 & 1e-11 & 4e-4 & 1e-92 & 6e+07 & 1.13 & 1.29 
& 5e-1005 & 5e-16 & 5e-08 & 2e-11 
& 21 & 23 
\\
5 
& 1e-1000 
& 6e-253 & 1e-63 & 4e-12 & 6e-5 & 6e-104 & 1e+09 & 1.14 & 1.30 
& 4e-1005 & 3e-18 & 5e-10 & 3e-14 
& 19 & 21 
\\\midrule
1 
& 1e-1000 
& 1e-241 & 9e-02 & 9e-08 & 0.97 & 9e-90 & 1e+11 & 1.08 & 1.42 
& 4e-1004 & 4e-10 & 2.2 & 3e-03 
& 30 & 97 
\\
2 
& 1e-1000 
& 3e-231 & 1e-29 & 2e-08 & 0.06 & 2e-93 & 6e+12 & 1.13 & 1.27 
& 1e-1003 & 9e-10 & 5e-02 & 5e-04 
& 27 & 31 
\\
3 
& 1e-1000 
& 7e-239 & 1e-38 & 4e-09 & 0.016 & 3e-88 & 8e+10 & 1.14 & 1.28 
& 3e-1003 & 1e-10 & 5e-03 & 8e-05 
& 25 & 28 
\\
4 
& 1e-1000 
& 4e-250 & 8e-35 & 6e-10 & 0.016 & 4e-99 & 4e+08 & 1.13 & 1.28 
& 9e-1004 & 6e-12 & 5e-04 & 7e-06 
& 24 & 26 
\\
5 
& 1e-1000 
& 2e-215 & 5e-40 & 5e-09 & 0.009 & 2e-85 & 4e+08 & 1.13 & 1.27 
& 2e-1003 & 2e-12 & 5e-05 & 6e-07 
& 23 & 25 
\\\midrule
1 
& 1e-1000 
& 2e-240 & 1e-23 & 3e-08 & 0.12 & 1e-94 & 4e+10 & 1.13 & 1.28 
& 5e-1004 & 3e-10 & 3e-01 & 4e-04 
& 26 & 35 
\\
2 
& 1e-1000 
& 4e-241 & 1e-34 & 4e-09 & 0.03 & 2e-91 & 9e+09 & 1.13 & 1.27 
& 4e-1005 & 3e-11 & 2e-02 & 1e-04 
& 24 & 30 
\\
3 
& 1e-1000 
& 5e-243 & 9e-36 & 1e-09 & 0.02 
& 3e-89 & 1e+09 & 1.13 & 1.28 
& 4e-1004 & 3e-11 & 2e-03 & 1e-05 
& 24 & 27 
\\
4 
& 1e-1000 
& 1e-235 & 3e-34 & 2e-09 & 0.02 & 4e-91 & 1e+09 & 1.13 & 1.27 
& 9e-1004 & 2e-12 & 2e-04 & 2e-06 
& 23 & 26 
\\
5 
& 1e-1000 
& 5e-212 & 4e-43 & 2e-09 & 0.006 & 2e-74 & 2e+08 & 1.13 & 1.27 
& 1e-1003 & 5e-12 & 2e-05 & 2e-07 
& 22 & 24 
\\
\end{tabular}}}
	\label{tab_ex4_two}
\end{table}


\section{Summary}\label{sec_conclusion}

We have shown that for certain singular equations the combination of Broyden's method with a preceding Newton--like step guarantees convergence for all starting points in a starlike domain with density~1; likewise for some accelerated schemes.
We have proven that for these problems the Broyden matrices converge and the Broyden steps violate uniform linear independence. 
Numerical experiments confirmed the results and suggested some extensions.
The code is freely available. 


\bibliographystyle{alphaurl}
\bibliography{lit}

\end{document}